\numberwithin{equation}{section}
\theoremstyle{plain}
\newtheorem{theorem}{Theorem}[section]
\newtheorem{lemma}{Lemma}[section]
\newtheorem{proposition}{Proposition}[section]
\theoremstyle{definition}
\newtheorem{example}{Example}[section]
\newtheorem{remark}{Remark}[section]
\def\BE{\mathbb E}
\def\BR{\mathbb R}
\def\rd{\mathrm d}
\def\rdiv{\mathrm{div}}
\def\e{\mathrm e}
\def\supp{\mathrm{supp}}
\def\La{\Lambda}
\def\Om{\Omega}
\def\al{\alpha}
\def\be{\beta}
\def\ga{\gamma}
\def\de{\delta}
\def\ep{\epsilon}
\def\ve{\varepsilon}
\def\te{\theta}
\def\ka{\kappa}
\def\la{\lambda}
\def\vp{\varphi}
\def\f{\frac}
\def\nb{\nabla}
\def\ov{\overline}
\def\pa{\partial}
\def\wt{\widetilde}
\title[Energy decay of nonlocal viscoelastic equations]{Energy decay of nonlocal viscoelastic equations with nonlinear damping and polynomial nonlinearity}
\author[Qingqing Peng]{Qingqing Peng$^{1,2}$}
\thanks{$^1$School of Mathematics and Statistics \& Hubei Key Laboratory of Engineering Modeling and Scientific Computing, Huazhong University of Science and Technology, Wuhan 430074, China.}
\thanks{$^2$Department of Mathematics, Kyoto University, Kitashirakawa-Oiwakecho, Sakyo-ku, Kyoto 606-8502, Japan.}
\author[Yikan Liu]{Yikan Liu$^{2,*}$}
\thanks{$^*$Corresponding author.  E-mail: {\tt liu.yikan.8z@kyoto-u.ac.jp}}
\keywords{Energy decay, viscoelastic equation, polynomial nonlinearity, frictional damping.}
\begin{document}

\begin{abstract}
This paper is concerned with the energy decay of a viscoelastic variable coefficient wave equation with nonlocality in time as well as nonlinear damping and polynomial nonlinear terms. Using the Lyapunov method, we establish a polynomial energy decay for the solution under relatively weak assumptions regarding the kernel of the nonlocal term. More specifically, we improve the decay rate of the energy by additionally imposing a certain convexity assumption on the kernel. Several examples are provided to confirm such improvements to faster polynomial or even exponential decays.
\end{abstract}

\maketitle	

\section{Introduction}
Let $\BR_+:=(0,\infty)$ and $\Om\subset\BR^n$ ($n\ge1$) be a bounded domain with a smooth boundary $\pa\Om$. The main purpose of this manuscript is to investigate the energy decay rate of the following initial-boundary value problem for a nonlocal viscoelastic wave equation with variable coefficients, a nonlinear damping term and a polynomial nonlinear term:
\begin{equation}\label{1.1}
\left\{\begin{alignedat}{2}
& \begin{aligned}
& u_{tt}-\rdiv(A(x)\nb u)+\int_0^t f(t-s)\,\rdiv(a(x)\nb u(s))\,\rd s\\
& \quad+b(x)h(u_t)=k(x)|u|^{p-2}u
\end{aligned} & \quad &\mbox{in }\Om\times\BR_+,\\
& u=u^0,\ u_t=u^1 & \quad &\mbox{in }\Om\times\{0\},\\
& u=0 & \quad &\mbox{on }\pa\Om\times\BR_+,
\end{alignedat}\right.
\end{equation}
where the initial displacement $u^0\in H_0^1(\Om)$ and the initial velocity $u^1\in L^2(\Om)$. The principal coefficient $A(x)$ is a symmetric and strictly positive-definite matrix-valued function on $\ov\Om$. The nonlocal term $\int_0^t f(t-s)\,\rdiv(a(x)\nb u(s))\,\rd s$ represents the memory effect in time, which takes the form of a convolution of the solution $u$ up to the second derivatives in space with the kernel function $f(t)$. Both of the nonlinear damping $h(u_t)$ and polynomial nonlinear term $|u|^{p-2}u$ are equipped with $x$-dependent coefficients $b(x)$ and $k(x)$ respectively, among which we assume
$$
k\in C^1(\ov\Om),\quad k\ge0\mbox{ on }\ov\Om.
$$
Moreover, the exponent constant $p$ satisfies
$$
\left\{\begin{alignedat}{2}
& p>2, & \quad & n=1,2,\\
& 2< p\le\f{2n-2}{n-2}, & \quad & n\ge3.
\end{alignedat}\right.
$$
Detailed assumptions on $A(x),f(t),a(x),b(x)$ will be specified in Section \ref{sec-prewell}.

Over the past few decades, significant results have been established regarding well-posedness and energy decays of nonlinear wave equations with either weak or strong damping. As a pioneering work, Webb \cite{W1} first investigated the well-posedness of solutions through the operator theory and examined the long-term behavior of these solutions using Lyapunov stability techniques for the semilinear wave equation with strong damping and a forcing term. Gazzola and Squassina \cite{G1} studied the semilinear wave equation with strong frictional damping and established both existence and nonexistence results for solutions when the initial energy does not exceed the depth of the potential well. Further, they discovered the blow-up phenomena by potential well methods along with Levine's concavity techniques. Subsequently, Ma and Fang \cite{M1} demonstrated both existence and nonexistence of global weak solutions and derived energy decay estimates by combining potential well methods with a logarithmic Sobolev inequality applicable to the viscoelastic wave equations. Lian and Xu \cite{L1} further explored global existence, energy decay and infinite-time blow-up using methodologies similarly to \cite{M1}. Di, Shang and Song \cite{D1} discussed the semilinear wave equation with logarithmic nonlinearity and strong damping. They proved global well-posedness along with polynomial or exponential stability through potential methods combined with Lyapunov skills, and also considered various blow-up scenarios. Cui and Chai \cite{cui} investigated the energy decay for variable coefficient wave equations with logarithmic nonlinearity by applying vector assumptions and multiplicative methods. Moreover, Yang and Zhong \cite{Y1} investigated the global well-posedness and blow-up phenomena for a strongly damped wave equation with a time-varying source and singular dissipation. They established both local well-posedness and globally existing solutions based on cut-off techniques, multiplier methods, contraction mapping principles and modified potential approaches. Meanwhile, they also derived the blow-up results of solutions with arbitrary positive initial energy and the lifespan of the blow-up solutions are derived.

Incorporating the nonlocal effect in time, viscoelastic wave equations with frictional damping have also witnessed significant developments concerning the energy decay and blow-up phenomena. In \cite{C1}, Cavalcanti and Oquendo examined a viscoelastic equation with frictional damping as follows:
\[
u_{tt}-\triangle u+\int_0^t g(t-s)\,\rdiv(a(x)\nb u(s))\,\rd s+b(x)h(u_t)+f(u)=0.
\]
They derived decay rates when the kernel function $g(s)$ and the coefficients $a(x),b(x)$ satisfy certain conditions. Dias et al. \cite{D3} explored the energy decay of a similar wave equation in an open unbounded domain, where the coefficients $a(x),b(x)$ meet the aforementioned conditions and the kernel function satisfies some convexity conditions. Furthermore, Cavalcanti et al. \cite{C2} established stability results for an equation on a compact Riemannian manifold, with similar assumptions on $g,a,b$ similarly to those in \cite{C1}. Jin et al. \cite{22} discussed uniform exponential and polynomial decay rates for solutions under weaker conditions than those in \cite{C1}. Mustafa and Abusharkh \cite{M2} investigated the energy decay of plate equations with frictional and viscoelastic damping, obtaining decay results under assumptions similarly to those in \cite{D3}. In particular, Li and Gao \cite{F1} used Lyapunov methods to derive blow-up results for viscoelastic equations and provided estimates for blow-up times. Ha and Park \cite{H1} also established blow-up results and local existence of solutions for viscoelastic equations with logarithmic nonlinearity using potential well methods and Lyapunov techniques. Additionally, Hao and Du \cite{H2} studied the energy decay and blow-up for viscoelastic equations with variable coefficients and logarithmic nonlinearity, employing potential well and Lyapunov methods akin to those in \cite{H1}.

However, to the best of our knowledge, there are limited researches on a viscoelastic equation with variable coefficients that incorporates a frictional damping term $b(x)h(u_t)$ and a nonlinear term $k(x)|u|^{p-2}u$. Motivated by the literature revisited above, in this article we investigate the energy decay rates of solutions to viscoelastic equations taking the form of \eqref{1.1}. We establish energy decay results under weaker assumptions on the kernel function $f(t)$ compared to previous works such as \cite{C1,C2,H1,H2}. In this sense, our findings improve the aforementioned results.

The remainder of this paper is organized as follows. In Section \ref{sec-prewell}, we specify assumptions on coefficients and prepare preliminary lemmas including some elementary estimates of energies the global well-posedness of solutions. In Section \ref{sec-main}, we state the two main results of this article, namely, the polynomial energy decay in Theorem \ref{theorem3.2} as well as a refined decay estimate in Theorem \ref{the3.2} under a weak convexity assumption on the kernel function $f(t)$. Further, we construct several examples to illustrate that Theorem \ref{the3.2} does imply faster decay rates in some cases. Subsequently, Sections \ref{sec-proof1}--\ref{sec-proof2} are devoted to the proofs of Theorems \ref{theorem3.2}--\ref{the3.2}, respectively.

\section{Preliminary and well-posedness}\label{sec-prewell}

In this section, we fix notations and introduce some basic definitions, important lemmas and function spaces for the sake of the statements and proofs of our main results.

Throughout this article, we denote the norm of the Lebesgue space $L^p(\Om)$ ($1\le p\le\infty$) by $\|\cdot\|_p$, and the inner product of $L^2(\Om)$ by $(\,\cdot\,,\,\cdot\,)$.

\subsection{Some assumptions and definitions}

We start with giving assumptions on the coefficients involved in the governing equation of \eqref{1.1}.\medskip

{\bf(A1)} The function $A(x)$ satisfies $A=(a_{ij})_{1\le i,j\le n}\in C^\infty(\ov\Om;\BR^{n\times n})$ and there exist constants $\mu_0>\la_0>0$ such that
\begin{equation}\label{1.2}
\la_0|\xi|^2\le A(x)\xi\cdot\,\xi\le\mu_0|\xi|^2,\quad\forall\,x\in\ov\Om,\ \forall\,\xi\in\BR^n.
\end{equation}

{\bf(A2)} The functions $a(x)$ and $b(x)$ satisfy
$$
0\le a\in C^1(\ov\Om),\quad0\le b\in L^\infty(\Om),\quad a\not\equiv0\mbox{ on }\pa\Om
$$
and there exists a constant $\ka>0$ such that
\[
a+b\ge\ka>0\mbox{ in }\Om.
\]

{\bf(A3)} The function $h\in C(\BR)$ is nondecreasing and satisfies
$$
h(s)s\ge0,\quad\forall\,s\in\BR.
$$
Moreover, there exist constants $c_i>0$ ($i=1,2,3,4$) and $q\ge1$ such that
\[
\begin{cases}
c_1|s|\le|h(s)|\le c_2|s|, & |s|>1,\\
c_3|s|^q\le|h(s)|\le c_4|s|^{1/q}, & |s|\le1.
\end{cases}
\]

{\bf(A4)} The function $f\in C^1(\ov{\BR_+};\BR_+)$ is nonincreasing and satisfies
\begin{equation}\label{2.1}
f(0)>0,\quad\la_0-\|a\|_\infty\int_0^\infty f(t)\,\rd t:=\ell>0.
\end{equation}

Next, we define
\begin{equation}\label{eq-def-FM}
F(t):=\int_t^\infty f(s)\,\rd s,\quad M(\de):=\int_0^\infty\f{f(s)}{K_\de(s)}\,\rd s,\quad K_\de(s):=-\f{f'(s)}{f(s)}+\de,
\end{equation}
where $\de\in(0,1)$ is a constant.

We introduce the energy functional as follows
\begin{align}
E(t) & :=\f12\|u_t(t)\|_2^2+\f12\int_\Om A\nb u(t)\cdot\nb u(t)\,\rd x-\f12\int_0^t f(s)\,\rd s\int_\Om a|\nb u(t)|^2\,\rd x\nonumber\\
& \quad\:\,+\f12(f\circ\nb u)(t)-\f1p\int_\Om k|u(t)|^p\,\rd x,\label{2.5}
\end{align}
where
\begin{equation}\label{eq-f-grad}
(f\circ\nb u)(t):=\int_0^t f(t-s)\int_\Om a|\nb(u(t)-u(s))|^2\,\rd x\rd s\ge0.
\end{equation}
Here owing to assumptions (A1) and (A4), we estimate
\begin{align}
& \quad\,\int_\Om A\nb u(t)\cdot\nb u(t)\,\rd x-\int_0^t f(s)\,\rd s\int_\Om a|\nb u(t)|^2\,\rd x\nonumber\\
& \ge\int_\Om\la_0|\nb u(t)|^2\,\rd x-\int_0^\infty f(t)\,\rd t\int_\Om a|\nb u(t)|^2\,\rd x\ge\left(\la_0-\|a\|_\infty\int_0^\infty f(t)\,\rd t\right)\|\nb u(t)\|_2^2\nonumber\\
& =\ell\|\nb u(t)\|_2^2.\label{eq-lower0}
\end{align}
To deal with the remaining negative term in \eqref{2.5}, we define an auxiliary energy functional
\begin{align}
\BE(t) & :=E(t)+\f1p\int_\Om k|u(t)|^p\,\rd x\nonumber\\
& =\f12\|u_t(t)\|_2^2+\f12\int_\Om A\nb u(t)\cdot\nb u(t)\,\rd x-\f12\int_0^t f(s)\,\rd s\int_\Om a|\nb u(t)|^2\,\rd x+\f12(f\circ\nb u)(t).\label{r4}
\end{align}
Then it is readily seen from \eqref{eq-lower0} that
\begin{equation}\label{eq-lower1}
\BE(t)\ge\f\ell2\|\nb u(t)\|_2^2\ge0.
\end{equation}
On the other hand, by formally differentiating \eqref{2.5} and employing the original problem \eqref{1.1}, it is not difficult to calculate
\begin{equation}\label{2.6}
E'(t)=\f12(f'\circ\nb u)(t)-\f{f(t)}2\int_\Om a|\nb u(t)|^2\,\rd x-\int_\Om b\,h(u_t(t))u_t(t)\,\rd x.
\end{equation}
Thanks to assumptions (A2)--(A4), we see that $E'(t)\le0$, that is, the energy $E(t)$ is nonincreasing.

\subsection{Some lemmas and well-posedness}

\begin{lemma}[see \cite{A3}]\label{lemma3}
Let $r$ be a constant satisfying $2\le r\le\infty$ if $n=1,2$ and $2\le r\le 2_*=\f{2n}{n-2}$ if $n\ge3$. Then there is an optimal constant $B_r>0$ depending on $r$ such that
$$
\|w\|_r^r\le B_r\|\nb w\|_2^r,\quad\forall\,w\in H_0^1(\Om).
$$
\end{lemma}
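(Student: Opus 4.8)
The statement is precisely the continuous embedding $H_0^1(\Om)\hookrightarrow L^r(\Om)$ for the indicated range of $r$, rewritten so that the full $H^1$-norm is replaced by the gradient seminorm $\|\nb\cdot\|_2$. My plan is therefore to combine the Sobolev embedding theorem with the Poincar\'e inequality, and to recover the optimal constant by a routine extremal argument. Since the lemma is quoted from \cite{A3}, the two analytic ingredients below may simply be cited; I indicate how they assemble into the stated inequality.

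First I would invoke the Sobolev embedding theorem on the bounded smooth domain $\Om$: for $r$ in the stated range there is a constant $C_1=C_1(n,r,\Om)>0$ with $\|w\|_r\le C_1\|w\|_{H^1(\Om)}$ for all $w\in H^1(\Om)$. When $n\ge3$ this is the Gagliardo--Nirenberg--Sobolev inequality in the regime $2\le r\le 2_*=\f{2n}{n-2}$; for $n=1$ it follows from Morrey's inequality, and for $n=2$ from the corresponding embedding (valid for every finite $r$). Next, since $\Om$ is bounded and $w\in H_0^1(\Om)$, the Poincar\'e inequality provides $C_P=C_P(\Om)>0$ such that $\|w\|_2\le C_P\|\nb w\|_2$, whence
$$
\|w\|_{H^1(\Om)}^2=\|w\|_2^2+\|\nb w\|_2^2\le(1+C_P^2)\|\nb w\|_2^2,
$$
so that on $H_0^1(\Om)$ the seminorm $\|\nb\cdot\|_2$ is equivalent to the full norm. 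Combining the two displays gives $\|w\|_r\le C_1\sqrt{1+C_P^2}\,\|\nb w\|_2$, and raising both sides to the $r$-th power yields the claim with a constant of the form $\big(C_1\sqrt{1+C_P^2}\big)^r$. The optimal $B_r$ is then identified as $B_r:=\sup\{\,\|w\|_r^r/\|\nb w\|_2^r:w\in H_0^1(\Om),\ \nb w\not\equiv0\,\}$, which is finite by the bound just derived.

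The only delicate point is the endpoint $r=2_*$ for $n\ge3$: there the embedding $H_0^1(\Om)\hookrightarrow L^{2_*}(\Om)$ is continuous but not compact, so one cannot appeal to a compactness argument and must instead rely on the sharp Sobolev inequality directly to guarantee finiteness of $C_1$ (equivalently of $B_r$). For $r$ strictly below the critical threshold (and for $n=1,2$ with $r<\infty$) the embedding is compact, which makes the existence of the optimal constant transparent. I would only caution that at the extreme endpoint $r=\infty$ the displayed inequality is genuinely available for $n=1$ via Morrey's inequality, whereas for $n=2$ it should be read as holding for all finite $r$; in the sequel only finite exponents in the range governed by $p$ are actually used, so this caveat does not affect the applications.
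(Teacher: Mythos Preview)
The paper does not prove this lemma at all; it is simply quoted as a standard result from Adams--Fournier \cite{A3} and immediately used. Your argument is the standard one (Sobolev embedding plus Poincar\'e on $H_0^1(\Om)$) and is correct, including your caveat that for $n=2$ the endpoint $r=\infty$ should be read as ``all finite $r$''; since the paper only ever invokes the lemma with the finite exponents $r=2$, $r=p$, and $r=2(p-1)$, this point is harmless.
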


\begin{lemma}
Under assumptions {\rm(A1)--(A4),} define
\[
H(\La):=\f12\La^2-\f{K B}p\La^p,\quad\La(t):=\left\{\ell\|\nb u(t)\|_2^2+(f\circ\nb u)(t)\right\}^{1/2},
\]
where $K:=\|k\|_\infty,$ $B:=B_p\ell^{-p/2}$ and $\ell$ was the constant introduced in \eqref{2.1}. Then there holds $E(t)\ge H(\La(t))$.
\end{lemma}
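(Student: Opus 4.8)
The plan is to recognize $H(\La(t))$ as the outcome of bounding the quadratic part of $E(t)$ from below and the polynomial part from above, both expressed through the single quantity $\La(t)$. Recalling the auxiliary functional $\BE(t)$ from \eqref{r4}, which is precisely $E(t)$ with the polynomial term removed, I would first sharpen the lower estimate \eqref{eq-lower0}: discarding the nonnegative kinetic term $\f12\|u_t(t)\|_2^2$ and retaining the nonnegative memory term $\f12(f\circ\nb u)(t)$, one obtains
$$
\BE(t)\ge\f\ell2\|\nb u(t)\|_2^2+\f12(f\circ\nb u)(t)=\f12\La(t)^2,
$$
where the final equality is merely the definition of $\La(t)$.

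It then remains to control the polynomial contribution $\f1p\int_\Om k|u(t)|^p\,\rd x=\BE(t)-E(t)$ from above by $\f{KB}p\La(t)^p$. Bounding $k\le K=\|k\|_\infty$ pointwise and applying the Sobolev-type inequality of Lemma \ref{lemma3} with $r=p$ yields
$$
\f1p\int_\Om k|u(t)|^p\,\rd x\le\f Kp\|u(t)\|_p^p\le\f{KB_p}p\|\nb u(t)\|_2^p.
$$
To rewrite the right-hand side through $\La(t)$, I would again invoke the nonnegativity \eqref{eq-f-grad} of $(f\circ\nb u)(t)$, this time to drop it: from $\ell\|\nb u(t)\|_2^2\le\La(t)^2$ it follows that $\|\nb u(t)\|_2^p\le\ell^{-p/2}\La(t)^p$, and since $B=B_p\ell^{-p/2}$ the polynomial term is at most $\f{KB}p\La(t)^p$.

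Subtracting this upper bound from the lower bound for $\BE(t)$ gives $E(t)=\BE(t)-\f1p\int_\Om k|u|^p\,\rd x\ge\f12\La(t)^2-\f{KB}p\La(t)^p=H(\La(t))$, as claimed. The argument is essentially bookkeeping, so the only point genuinely requiring attention is that Lemma \ref{lemma3} be applicable, i.e.\ that the exponent lie in the admissible range $2\le p\le 2_*$; this is guaranteed by the standing hypotheses on $p$, since for $n\ge3$ one has $p\le\f{2n-2}{n-2}<\f{2n}{n-2}=2_*$. A minor conceptual subtlety worth flagging is that the nonnegative memory term $(f\circ\nb u)(t)$ is used twice in opposite directions—kept so as to build $\La(t)^2$ in the lower bound, yet dropped to pass from $\|\nb u\|_2$ to $\La$ in the upper bound—both of which are of course legitimate.
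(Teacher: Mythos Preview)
Your proof is correct and follows essentially the same route as the paper: both arguments drop the kinetic term, use the lower bound \eqref{eq-lower0} together with the nonnegative memory term to obtain $\f12\La(t)^2$, and then apply Lemma~\ref{lemma3} with $r=p$ plus $\ell\|\nb u(t)\|_2^2\le\La(t)^2$ to bound the polynomial contribution by $\f{KB}p\La(t)^p$. Your additional remarks on the admissibility of $p$ and the twofold use of the nonnegativity of $(f\circ\nb u)(t)$ are accurate and make the exposition clearer.
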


\begin{proof}
A direct application of Lemma \ref{lemma3} yields
\begin{equation}
\f1p\int_\Om k|u(t)|^p\,\rd x\le\f Kp\|u(t)\|_p^p\le\f{K B_p}p\|\nb u(t)\|_2^p.\label{r3}
\end{equation}
Then we estimate
\begin{align*}
E(t) & \ge\f\ell2\|\nb u(t)\|_2^2+\f12(f\circ\nb u)(t)-\f{K B_p}p\|\nb u(t)\|_2^p\\
& \ge\f12\left\{\ell\|\nb u(t)\|_2^2+(f\circ\nb u)(t)\right\}-\f{K B}p\left\{\ell\|\nb u(t)\|_2^2+(f \circ\nb u)(t)\right\}^{p/2}\\
& =\f12\La(t)^2-\f{K B}p\La(t)^p=H(\La(t)).
\end{align*}
\end{proof}

It is easy to verify that $H(\La)$ attains a maximum at
$$
\La_1:=\left(\f1{K B}\right)^{\f1{p-2}}
$$
with the maximum
\[
E_1:=H(\La_1)=\left(\f12-\f1p\right)\La_1^2>0.
\]

\begin{lemma}[see {\cite[Lemma 3.4]{23}}]\label{lemma 2.7}
Under assumptions {\rm(A1)--(A4),} let $u$ be the solution to problem \eqref{1.1} with the initial data satisfying
\[
E(0)<E_1,\quad\La(0)<\La_1.
\]
Then there exists a constant $\La_2\in(0,\La_1)$ such that
\[
\La(t)=\left\{\ell\|\nb u(t)\|_2^2+(f\circ\nb u)(t)\right\}^{1/2}\le\La_2,\quad\forall\,t\in[0,T),
\]
where $T$ denotes the maximal existence time of the solution.
\end{lemma}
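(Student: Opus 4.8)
The plan is to run the standard potential-well (barrier) argument, exploiting three facts that are already available: $E(t)$ is nonincreasing (by \eqref{2.6} together with (A2)--(A4)), $E(t)\ge H(\La(t))$ along the solution, and $H$ is unimodal on $[0,\infty)$ with a strict interior maximum $E_1$ at $\La_1$.

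First I would record the shape of $H$. Since
\[
H'(\La)=\La-KB\,\La^{p-1}=\La\bigl(1-KB\,\La^{p-2}\bigr)
\]
and $KB\,\La_1^{p-2}=1$ by the definition of $\La_1$, the function $H$ is strictly increasing on $[0,\La_1]$ from $H(0)=0$ up to $H(\La_1)=E_1$, and strictly decreasing on $[\La_1,\infty)$; in particular $H\ge0$ on $[0,\La_1]$. Next I would check that the hypotheses force $E(0)\in[0,E_1)$: at $t=0$ one has $(f\circ\nb u)(0)=0$, so $\La(0)=\sqrt{\ell}\,\|\nb u^0\|_2\in[0,\La_1)$, and combining $E(0)\ge H(\La(0))$ with $H\ge0$ on $[0,\La_1]$ yields $E(0)\ge0$, while $E(0)<E_1$ is assumed. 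Hence there is a unique $\La_2$ on the increasing branch $[0,\La_1)$ with $H(\La_2)=E(0)$, and $\La_2\in(0,\La_1)$ whenever $E(0)>0$.

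The core is the continuity argument. From the well-posedness theory the solution satisfies $u\in C([0,T);H_0^1(\Om))$, so $t\mapsto\|\nb u(t)\|_2$ and $t\mapsto(f\circ\nb u)(t)$ are continuous, and therefore so is $t\mapsto\La(t)$. I would first claim $\La(t)<\La_1$ on $[0,T)$: if this failed, continuity of $\La$ together with $\La(0)<\La_1$ would produce a first time $t_0$ with $\La(t_0)=\La_1$, whence $E(t_0)\ge H(\La(t_0))=E_1$, contradicting $E(t_0)\le E(0)<E_1$. Consequently $\La(t)$ remains in the interval $[0,\La_1)$ on which $H$ is strictly increasing. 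Finally, for every $t\in[0,T)$,
\[
H(\La(t))\le E(t)\le E(0)=H(\La_2),
\]
and strict monotonicity of $H$ on $[0,\La_1)$ gives $\La(t)\le\La_2$, the desired uniform bound.

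The only genuinely delicate point is justifying the continuity of $\La(t)$, on which the first-crossing-time argument rests; this reduces to the temporal regularity of the solution furnished by the well-posedness result, and everything else is elementary monotonicity bookkeeping. A minor caveat is the degenerate case $E(0)=0$, which forces $\La_2=0$ and hence $u\equiv0$; for nontrivial data $E(0)>0$, so $\La_2\in(0,\La_1)$ exactly as stated.
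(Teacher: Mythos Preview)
Your argument is correct and is precisely the standard potential-well barrier argument. The paper itself does not supply a proof of this lemma; it merely cites \cite[Lemma 3.4]{23}, and your write-up reproduces the expected content of that reference: analyze the shape of $H$, define $\La_2$ as the unique preimage of $E(0)$ on the increasing branch, use continuity of $\La(t)$ together with $E(t)\le E(0)<E_1$ to rule out a first crossing of $\La_1$, and then invoke monotonicity of $H$ on $[0,\La_1)$.
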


Next, we give the existence result of solutions to system \eqref{1.1}.

\begin{proposition}[Local existence]\label{theorem1}
Let $(u^0,u^1)\in H_0^1(\Om)\times L^2(\Om) $ be given and assumptions {\rm(A1)--(A4)} hold. Then there exists $T>0$ such that problem \eqref{1.1} admits a unique local weak solution $u$ on $\ov\Om\times[0,T]$ such that
$$
u\in C^1([0,T];L^2(\Om))\cap C([0,T];H_0^1(\Om)).
$$
\end{proposition}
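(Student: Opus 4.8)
The plan is to construct the solution by the Faedo--Galerkin method and then resolve the nonlinear damping through a monotone-operator argument. Let $\{w_j\}_{j\ge1}\subset H_0^1(\Om)$ be a smooth orthonormal basis of $L^2(\Om)$ (for instance the eigenfunctions of $-\rdiv(A(x)\nb\,\cdot\,)$ under the Dirichlet condition), and put $V_m:=\mathrm{span}\{w_1,\dots,w_m\}$. First I would seek an approximate solution $u_m(t)=\sum_{j=1}^m g_{jm}(t)w_j$ solving the projected system
\begin{align*}
& (\pa_t^2 u_m,w_j)+\int_\Om A\nb u_m\cdot\nb w_j\,\rd x-\int_0^t f(t-s)\int_\Om a\,\nb u_m(s)\cdot\nb w_j\,\rd x\rd s\\
& \quad+\int_\Om b\,h(\pa_t u_m)w_j\,\rd x=\int_\Om k|u_m|^{p-2}u_m\,w_j\,\rd x,\qquad j=1,\dots,m,
\end{align*}
with $u_m(0)\to u^0$ in $H_0^1(\Om)$ and $\pa_t u_m(0)\to u^1$ in $L^2(\Om)$. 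Since $h$ is merely continuous, the resulting Volterra integro-differential system for $(g_{jm})$ has a continuous right-hand side, and Peano's theorem (in its Carath\'eodory form) furnishes a local solution on some $[0,t_m)$.

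The second step is to derive $m$-uniform a priori bounds that extend $t_m$ to a common $T>0$ and supply compactness. Testing the projected system with $g'_{jm}$ and summing reproduces the energy identity \eqref{2.6} at the Galerkin level, so $E_m'(t)\le0$ by (A2)--(A4); adding $\f1p\f{\rd}{\rd t}\int_\Om k|u_m|^p\,\rd x$ and invoking the auxiliary energy $\BE_m$ of \eqref{r4} gives, via the Sobolev embedding of Lemma \ref{lemma3} and the admissible range $2(p-1)\le\f{2n}{n-2}$,
\[
\BE_m'(t)\le\int_\Om k|u_m|^{p-2}u_m\,\pa_t u_m\,\rd x\le K\|u_m\|_{2(p-1)}^{p-1}\|\pa_t u_m\|_2\le C\,\BE_m(t)^{p/2}.
\]
As $p/2>1$, comparison with the scalar ODE $y'=Cy^{p/2}$ shows $\BE_m(t)$ stays bounded on $[0,T]$ for some $T>0$ depending only on $\BE(0)$; by \eqref{eq-lower1} this bounds $u_m$ in $L^\infty(0,T;H_0^1(\Om))$ and $\pa_t u_m$ in $L^\infty(0,T;L^2(\Om))$ uniformly in $m$, and in particular the approximate solution exists on all of $[0,T]$. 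The growth condition in (A3) then bounds $h(\pa_t u_m)$ in $L^\infty(0,T;L^2(\Om))$, and the equation bounds $\pa_t^2 u_m$ in, say, $L^\infty(0,T;H^{-1}(\Om))$.

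Third, I would pass to the limit. Along a subsequence, $u_m\rightharpoonup u$ weakly-$*$ in $L^\infty(0,T;H_0^1)$, $\pa_t u_m\rightharpoonup\pa_t u$ weakly-$*$ in $L^\infty(0,T;L^2)$, $h(\pa_t u_m)\rightharpoonup\chi$ weakly in $L^2(\Om\times(0,T))$, and by the Aubin--Lions lemma $u_m\to u$ strongly in $C([0,T];L^2)$. The strong convergence (together with the uniform $H_0^1$ bound and local Lipschitzness of $s\mapsto|s|^{p-2}s$) identifies $|u_m|^{p-2}u_m\to|u|^{p-2}u$, while the linear and memory terms pass to the limit by weak convergence, so the limit equation holds with $\chi$ in place of $h(\pa_t u)$. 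The main obstacle is to prove $\chi=h(\pa_t u)$: because $\pa_t u_m$ converges only weakly, continuity of $h$ is not enough, and I would instead use the monotonicity of $h$ through Minty's trick, combining $\int_0^T\!\int_\Om b\,(h(\pa_t u_m)-h(v))(\pa_t u_m-v)\,\rd x\rd t\ge0$ for arbitrary $v$ with the bound $\limsup_m\int_0^T\!\int_\Om b\,h(\pa_t u_m)\pa_t u_m\le\int_0^T\!\int_\Om b\,\chi\,\pa_t u$, which follows from the Galerkin energy identity once the strongly convergent nonlinear term is accounted for. This is the crux of the argument.

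Finally, the strong continuity $u\in C([0,T];H_0^1)\cap C^1([0,T];L^2)$ and attainment of the initial data follow from the standard energy-continuity argument that upgrades weak to strong continuity in the Hilbert setting, while uniqueness is obtained by testing the equation for the difference $w=u-\wt u$ of two solutions with $\pa_t w$ (justified in the $H^{-1}$--$H_0^1$ duality): the damping contributes $\int_\Om b\,(h(\pa_t u)-h(\pa_t\wt u))\pa_t w\,\rd x\ge0$ with a favourable sign by monotonicity, the polynomial difference obeys $\bigl|\,|u|^{p-2}u-|\wt u|^{p-2}\wt u\,\bigr|\le C(|u|^{p-2}+|\wt u|^{p-2})|w|$ and is controlled on bounded sets by H\"older and Sobolev, and Gronwall's inequality then forces $w\equiv0$.
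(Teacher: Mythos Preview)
Your sketch is sound, but note that the paper itself does not actually give a proof of this proposition: it simply writes ``By the Faedo--Galerkin method and the contraction mapping theorem, it is easy to prove Proposition~\ref{theorem1}. For detailed proof, please refer to \cite{H1} and \cite{Y1}.'' So there is no detailed argument to compare against, only the indication that in the cited references one first solves an auxiliary problem by Galerkin approximation and then closes via a Banach fixed point (the contraction being applied to the source map $v\mapsto u$ with $k|v|^{p-2}v$ frozen).

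Your route is genuinely different: you run the Galerkin scheme directly on the full nonlinear problem and identify the weak limit of $h(\pa_t u_m)$ by Minty's monotonicity trick rather than by a contraction step. This is a perfectly legitimate alternative, standard in the Lions tradition, and in fact slightly more economical since it avoids the two-tier fixed-point structure. What it buys you is a one-shot construction; what the contraction approach buys is a cleaner uniqueness argument (it comes for free from the fixed point). Two small points worth tightening in your write-up: (i) the $\limsup$ inequality needed for Minty requires weak lower semicontinuity of the quadratic terms in $\BE_m(T)$ and of the memory term, which you should state explicitly since $E_m$ itself contains terms of both signs; (ii) your parenthetical ``justified in the $H^{-1}$--$H_0^1$ duality'' for uniqueness is not quite right, since $\pa_t w\in L^2(\Om)$ only --- the energy identity for weak solutions with $u\in C([0,T];H_0^1)\cap C^1([0,T];L^2)$ and $u_{tt}\in L^2(0,T;H^{-1})$ is obtained via the usual regularization/density argument, not by a direct duality pairing.
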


By the Faedo-Galerkin method and the contraction mapping theorem, it is easy to prove Proposition \ref{theorem1}.  For detailed proof, please refer to \cite{H1} and \cite{Y1}.

\begin{lemma}\label{lemma2.8}
Under the same assumptions of Lemma $\ref{lemma 2.7},$ the unique weak solution $u$ to \eqref{1.1} satisfies
\begin{gather}
\f1p\int_\Om k|u(t)|^p\,\rd x\le\wt C E(t)\le\wt C E(0),\label{r1}\\
\BE(t)\le(1+\wt C)E(t)\le(1+\wt C)E(0)\label{r2},
\end{gather}
for any $t\in[0,T),$ where
$$
\wt C:=\f{2K B\La_2^{p-2}}{p-2K B\La_2^{p-2}}>0.
$$
\end{lemma}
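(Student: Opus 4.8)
The plan is to bootstrap both inequalities from the single pointwise bound \eqref{r3} together with Lemma \ref{lemma 2.7} and the lower estimate \eqref{eq-lower1}, so that no new differential inequality is required.

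First I would recast \eqref{r3} in terms of $\La(t)$. Since $B=B_p\ell^{-p/2}$, its right-hand side equals $\f{KB}p(\ell\|\nb u(t)\|_2^2)^{p/2}$. Because $\ell\|\nb u(t)\|_2^2\le\La(t)^2$ and, by Lemma \ref{lemma 2.7}, $\La(t)\le\La_2$ on $[0,T)$, I would split off one factor $\ell\|\nb u(t)\|_2^2$ and estimate the remaining power, whose exponent $(p-2)/2\ge0$, by $\La_2^{p-2}$; concretely $(\ell\|\nb u(t)\|_2^2)^{(p-2)/2}\le(\La_2^2)^{(p-2)/2}=\La_2^{p-2}$. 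This gives $\f1p\int_\Om k|u(t)|^p\,\rd x\le\f{KB}p\La_2^{p-2}\,\ell\|\nb u(t)\|_2^2$.

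Next I would pass to the auxiliary energy through \eqref{eq-lower1}, which reads $\ell\|\nb u(t)\|_2^2\le2\BE(t)$, yielding $\f1p\int_\Om k|u(t)|^p\,\rd x\le\f{2KB\La_2^{p-2}}p\,\BE(t)$. The decisive step is then to invoke the identity $\BE(t)=E(t)+\f1p\int_\Om k|u(t)|^p\,\rd x$ from \eqref{r4}: writing $P(t):=\f1p\int_\Om k|u(t)|^p\,\rd x$ and $\al:=\f{2KB\La_2^{p-2}}p$, the previous bound becomes the closed scalar inequality $P(t)\le\al(E(t)+P(t))$ for the single unknown $P(t)$. Solving it gives $P(t)\le\f{\al}{1-\al}E(t)=\wt C E(t)$, which is precisely the first part of \eqref{r1}; the second part follows from the monotonicity $E(t)\le E(0)$ recorded after \eqref{2.6}. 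Estimate \eqref{r2} is then immediate, since $\BE(t)=E(t)+P(t)\le(1+\wt C)E(t)\le(1+\wt C)E(0)$.

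The only point demanding genuine care is the well-definedness and positivity of $\wt C$, that is, $1-\al>0$, or equivalently $p>2KB\La_2^{p-2}$. Here the strict inequality $\La_2<\La_1$ from Lemma \ref{lemma 2.7} is essential: as $\La_1^{p-2}=1/(KB)$, one has $KB\La_2^{p-2}<KB\La_1^{p-2}=1$, whence $\al<2/p<1$ using $p>2$. A short rearrangement then identifies $\f{\al}{1-\al}$ with the stated constant $\wt C$. Apart from this verification the argument is purely algebraic, so I anticipate no substantial obstacle.
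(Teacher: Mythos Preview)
Your argument is correct and follows essentially the same route as the paper: both start from \eqref{r3}, use $\ell^{1/2}\|\nb u(t)\|_2\le\La(t)\le\La_2$ from Lemma~\ref{lemma 2.7} and $\ell\|\nb u(t)\|_2^2\le2\BE(t)$ from \eqref{eq-lower1} to reach the closed inequality $P(t)\le\al(E(t)+P(t))$, then solve for $P(t)$ and verify $\wt C>0$ via $\La_2<\La_1$. The only cosmetic difference is that the paper confirms $\wt C>0$ at the outset rather than at the end.
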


\begin{proof}
First, by
\[
0<2K B\La_2^{p-2}<2K B\La_1^{p-2}=2<p,
\]
we confirm that the above $\wt C>0$. Then it directly follows from \eqref{r3} that
\begin{align*}
\f1p\int_\Om k|u(t)|^p\,\rd x & \le\f{K B_p}p\|\nb u(t)\|_2^p\le\f{K B}p\left(\ell^{1/2}\|\nb u(t)\|_2\right)^{p-2}\ell\|\nb u(t)\|_2^2\\
& \le\f{2K B\La_2^{p-2}}p\left(E(t)+\f1p\int_\Om k|u(t)|^p\,\rd x\right),
\end{align*}
which implies the first half of \eqref{r1}. Combining \eqref{r1} and \eqref{r4}, we obtain
\[
\BE(t)=E(t)+\f1p\int_\Om k|u(t)|^p\,\rd x\le(1+\wt C)E(t).
\]
The second halves of \eqref{r1} and \eqref{r2} are trivial due to the monotonicity of the energy $E(t)$.
\end{proof}

\begin{remark}
Clearly, \eqref{r2} indicates that $\BE(t)$ is uniformly bounded for all $t\in [0, T)$. This implies that the solution is global and the maximal existence time $T=\infty$, for detailed proof please refer to \cite{P2}. At the same time, we have $0\le E(t)\le E(0)$ for all $t\in \ov{\BR_+}$.
\end{remark}

\section{Main results and examples}\label{sec-main}

In this section, we given the energy decay rates of global solutions of problem \eqref{1.1}. The first result is as follows.

\begin{theorem}\label{theorem3.2}
Suppose that assumptions {\rm(A1)--(A4)} and the conditions in Lemma $\ref{lemma 2.7}$ hold.  Let
\begin{equation}\label{x4.1}
E(0)<\min\left\{E_1,\left(\f\ell{8K B_p}\right)^{\f2{p-2}}\f\ell{2(1+\wt C)}\right\}.
\end{equation}
Then there exists a constant $C>0$ such that for any $t\ge t_0>0,$ the energy of problem \eqref{1.1} satisfies the polynomial decay
\begin{gather}
\int_0^\infty E(t)^{\f{q+1}2}\,\rd t\le C E(0),\label{r3.1}\\
E(t)\le C E(0)(1+t)^{-\f2{q+1}}.\label{r3.2}
\end{gather}
\end{theorem}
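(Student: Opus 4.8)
The plan is to dominate the energy by a perturbed Lyapunov functional, turn the resulting dissipation inequality into the integral estimate \eqref{r3.1}, and then read off \eqref{r3.2} from \eqref{r3.1} via the monotonicity of $E$. First I would introduce the two standard auxiliary functionals
\[
\psi(t):=\int_\Om u\,u_t\,\rd x,\qquad\chi(t):=-\int_\Om u_t\int_0^t f(t-s)\big(u(t)-u(s)\big)\,\rd s\,\rd x,
\]
and set $L(t):=N\,E(t)+\ve_1\psi(t)+\ve_2\chi(t)$ with $N\gg1$ and $\ve_1,\ve_2$ small. Bounding $|\psi|,|\chi|$ by the all-positive auxiliary energy $\BE$ and invoking \eqref{eq-lower1} and \eqref{r2} shows that $L$ is equivalent to $E$, i.e. $\be_1E(t)\le L(t)\le\be_2E(t)$; this equivalence is what ultimately lets me transfer any decay of $L$ back to $E$.

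Next I would differentiate. Computing $\psi'$ and $\chi'$ from the equation in \eqref{1.1}, substituting the energy identity \eqref{2.6}, and using the potential-well confinement of Lemmas \ref{lemma 2.7}--\ref{lemma2.8} to keep the source term $k|u|^{p-2}u$ subcritical and absorbable — this is precisely where the smallness \eqref{x4.1} of $E(0)$ is used, to keep the coefficient in front of $\|\nb u\|_2^2$ strictly positive after subtracting $\tfrac1p\int_\Om k|u|^p$ — I expect to arrive at a dissipation inequality
\[
L'(t)\le-c\,E(t)+C\int_\Om b\,|h(u_t)|^2\,\rd x .
\]
Two structural issues must be settled to reach this. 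The memory term $(f\circ\nb u)(t)$ has to be dominated by the dissipation $-(f'\circ\nb u)(t)$ appearing in \eqref{2.6}; under (A4) alone (no convexity) I would do this through the splitting $f=-f'/K_\de+\de\,f/K_\de$ built into \eqref{eq-def-FM}, using $K_\de\ge\de$ for the first part and the finite constant $M(\de)$ for the non-sign-definite second part. The lack of frictional damping where $b$ vanishes must be compensated by the coercivity $a+b\ge\ka$ in (A2), so that the viscoelastic dissipation covers $\{b=0\}$.

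The heart of the argument, and the step I expect to be the main obstacle, is upgrading $L'\le-cE+C\int_\Om b|h(u_t)|^2\rd x$ to \eqref{r3.1}, because the possibly degenerate damping $h$ resists a direct Lyapunov absorption. Splitting $\Om=\Om_1\cup\Om_2$ with $\Om_1:=\{|u_t|\le1\}$, on $\Om_2$ the bounds in (A3) give $|h(u_t)|^2\le(c_2^2/c_1)\,h(u_t)u_t$, so that part is controlled linearly by $-E'$ through \eqref{2.6}. On $\Om_1$, however, (A3) yields only $|h(u_t)|^2\le c_4^2|u_t|^{2/q}$, whereas the dissipation controls the \emph{smaller} moment $\int_\Om b\,h(u_t)u_t\ge c_3\int_{\Om_1}b|u_t|^{q+1}\rd x$; since $2/q\le q+1$ for $q\ge1$, no pointwise-in-time inequality closes this gap and any attempt to absorb produces the wrong power of $E$. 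The way I would circumvent this is to integrate the energy identity over unit intervals $[t,t+1]$: writing $D(t)^2:=E(t)-E(t+1)$, Hölder in space and in time bounds $\int_t^{t+1}\!\int_\Om b|u_t|^2\rd x\,\rd s$ by $C\,D(t)^{4/(q+1)}$, and feeding this into the interval version of the multiplier estimates gives the Nakao-type difference inequality $E(t)^{(q+1)/2}\le C\big(E(t)-E(t+1)\big)$. Summing this telescoping inequality over the integers — crucially \emph{without} absorbing back — yields $\sum_{k\ge0}E(k)^{(q+1)/2}\le C\,E(0)$, which, since $E$ is nonincreasing, is exactly \eqref{r3.1}.

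Finally, \eqref{r3.2} is immediate from \eqref{r3.1}: monotonicity of $E$ gives, for $t\ge t_0$,
\[
t\,E(t)^{(q+1)/2}\le\int_0^t E(s)^{(q+1)/2}\,\rd s\le C\,E(0),
\]
whence $E(t)\le C\,E(0)\,(1+t)^{-2/(q+1)}$ after adjusting the constants and using the a priori bound on $E(0)$ supplied by \eqref{x4.1}.
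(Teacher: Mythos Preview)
Your architecture is right—perturbed energy with the two multipliers, the $K_\de$ device for the memory under (A4) alone, potential-well control of the source via \eqref{x4.1}, and a separate argument for $q>1$—but two structural pieces are missing, and the Nakao step you propose for $q>1$ does not mesh with what is actually needed to close the differential inequality. First, $\chi$ must carry a weight supported in $\{a>0\}$; in the paper it is $F_4(t)=-\int_0^t f(t-s)\int_\Om\wt a\,u_t(u(t)-u(s))\,\rd x\rd s$ with a cut-off $\wt a$ built from $a$. Without this localization the cross terms in $\chi'$ produce $\int_\Om\big|\int_0^t f(t-s)\nb(u(t)-u(s))\,\rd s\big|^2\rd x$ over the \emph{whole} domain, while the dissipation \eqref{2.6} only controls the $a$-weighted $(f'\circ\nb u)$. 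Second, the splitting $f=-f'/K_\de+\de f/K_\de$ is the right identity but does not by itself yield $L'\le-cE+\ldots$: the piece $\de\!\int_0^t(f/K_\de)\psi$ involves past values $\|\nb u(s)\|_2^2$ and is not absorbable into $-cE(t)$, and since $E$ contains $\tfrac12(f\circ\nb u)$, reaching $-cE$ requires a \emph{negative} $(f\circ\nb u)$ contribution that none of $E',\psi',\chi'$ supplies. The paper closes both gaps by adjoining two further functionals from \cite{22}: $F_1(t)=\int_0^t F(t-s)\int_\Om a|\nb u(s)|^2\rd x\rd s$ gives $F_1'\le-\tfrac12(f\circ\nb u)+C\|\nb u\|_2^2$, and $F_2=M(\de)(\de F_1+E)$ absorbs the squared convolution at the price of the small remainder $M(\de)\de\|\nb u\|_2^2$. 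Only the enlarged $J=L+cF_1+cF_2$ satisfies $J'+\ep E\le C\int_\Om b(h(u_t)^2+u_t^2)\,\rd x$.

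This matters for your $q>1$ step. Nakao's unit-interval integration needs the boundary contribution of the Lyapunov functional over $[t,t+1]$ to be controlled by $E(t)$, but $J$ is \emph{not} equivalent to $E$ (the tail $F_1$ is not dominated by $E(t)$), so $J(t)-J(t+1)$ cannot be bounded by a power of $E(t)-E(t+1)$. The paper avoids this by multiplying the differential inequality by $\vp(t)=E(t)^{(q-1)/2}$ and integrating once over $[t_0,\tau)$; after integration by parts one only needs $J\ge0$ and $J(t_0)\le CE(0)$, both of which hold, and Young's inequality with exponents $\tfrac{q+1}{q-1},\tfrac{q+1}{2}$ then gives $\int_{t_0}^\infty E^{(q+1)/2}\le CE(0)$. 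Your final passage from \eqref{r3.1} to \eqref{r3.2} via monotonicity is fine.
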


Below, we add an extra assumption on the kernel function $f(t)$ in addition to (A4).\medskip

{\bf(A5)} Under assumption (A4), the function $f(t)$ further satisfies the following ordinary differential inequality
$$
f'(t)\le-\xi(t)G(f(t)),\quad t>0.
$$
Here $\xi\in C^1(\ov{\BR_+};\BR_+)$ is nonincreasing and $G\in C^2([0,f(0)];\ov{\BR_+})$ is a strictly increasing and strictly convex function satisfying $G(0)=G'(0)=0$.

\begin{remark}[see \cite{11}]\label{remark1}
If $G$ fulfills the above assumptions, then it allows an extension $\ov G$ in $\ov{\BR_+}$ inheriting the same monotonicity, convexity and regularity. For example, if
\[
G(f(0))=g_0,\quad G'(f(0))=g_1,\quad G''(f(0))=g_2,
\]
then we can define $\ov G$ for $t>f(0)$ by
\[
\ov G(t)=\left(g_0-g_1f(0)+\f{g_2}2f(0)^2\right)+(g_1-g_2f(0))t+\f{g_2}2t^2.
\]
With slight abuse of the notation, we will still denote such an extension by $G$. Meanwhile, the strict monotonicity of $G$ guarantees the existence of the inverse function $G^{-1}:\ov{\BR_+}\longrightarrow\ov{\BR_+}$.
\end{remark}

\begin{theorem}\label{the3.2}
Under all conditions in Theorem $\ref{theorem3.2},$ further let assumption {\rm(A5)} hold. Then there exist constants $t_0,\ve_1,k_1,k_2>0$ such that the energy of problem \eqref{1.1} satisfies
\begin{empheq}[left={E(t)\le\empheqlbrace}]{alignat=2}
& \f1{\ve_1}G_1^{-1}\left(k_1\int_{t_0}^t\,\xi(s)\,\rd s\right), & \quad & q=1,\label{7.1}\\
&E(0)\left\{t\,G_2^{-1}\left(\f{k_2}{t\int_{t_0}^t\,\xi(s)\,\rd s}\right)\right\}^{\f2{q+1}}, & \quad & q>1,\label{rr3.44}
\end{empheq}
for all $t>t_0,$ where
\[
G_1(t):=\int_t^{f(0)}\f1{s\,G'(s)}\,\rd s,\quad G_2(t):=t\,G'(\ve_1t).
\]
\end{theorem}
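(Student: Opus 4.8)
The plan is to sharpen the polynomial decay of Theorem~\ref{theorem3.2} by feeding the extra structure of assumption (A5) into the memory term. I would reuse the Lyapunov machinery already developed for Theorem~\ref{theorem3.2}: there one constructs a perturbed functional $\mathcal L(t)$ equivalent to $E(t)$ (that is, $\al_1 E(t)\le\mathcal L(t)\le\al_2 E(t)$ for suitable $\al_1,\al_2>0$) and, after absorbing the kinetic, gradient and frictional contributions and splitting the damping $h(u_t)$ into the regions $\{|u_t|\le1\}$ and $\{|u_t|>1\}$ via (A3), arrives at a differential inequality whose only non-dissipative term is the memory term. Schematically this reads $\mathcal L'(t)\le-c\,E(t)^{(q+1)/2}+C\,(f\circ\nb u)(t)$, the exponent $(q+1)/2$ being exactly the one that produced the rate $(1+t)^{-2/(q+1)}$ in \eqref{r3.2}. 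The whole refinement then rests on replacing the crude handling of $(f\circ\nb u)(t)$ by an estimate that records the convexity in (A5).

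\medskip
The central step is this re-estimation. From (A5) one has $G(f(t))\le-f'(t)/\xi(t)$, hence $f(t)\le G^{-1}(-f'(t)/\xi(t))$; since $\xi$ is nonincreasing, $\xi(t-s)\ge\xi(t)$ for $s\in[0,t]$, so $f(t-s)\le G^{-1}(-f'(t-s)/\xi(t))$. Writing $w(s):=\int_\Om a|\nb(u(t)-u(s))|^2\,\rd x\ge0$ and $\si(t):=\int_0^t w(s)\,\rd s$, I would apply Jensen's inequality to the concave function $G^{-1}$ against the probability measure $\si(t)^{-1}w(s)\,\rd s$ to get
\[
(f\circ\nb u)(t)\le\int_0^t G^{-1}\!\left(\f{-f'(t-s)}{\xi(t)}\right)w(s)\,\rd s\le\si(t)\,G^{-1}\!\left(\f{-(f'\circ\nb u)(t)}{\si(t)\,\xi(t)}\right).
\]
Because \eqref{2.6} together with (A2)--(A3) gives $-(f'\circ\nb u)(t)\le-2E'(t)$, and because the uniform bound \eqref{eq-lower1}--\eqref{r2} forces $w(s)\le\f{8\|a\|_\infty}{\ell}(1+\wt C)E(0)$ and hence $\si(t)\le C_0\,t$, this yields a self-contained control of $(f\circ\nb u)(t)$ through $-E'(t)$, $\xi(t)$ and the slowly growing weight $\si(t)\sim t$. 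I expect this to be the main obstacle: one must invoke Remark~\ref{remark1} to extend $G$ (and so $G^{-1}$) to all of $\ov{\BR_+}$, verify that the argument of $G^{-1}$ stays in the range where $G$ is genuinely convex, and---crucially---carry the linear factor $\si(t)\sim t$ through the rest of the argument, since it is precisely this factor that produces the explicit $t$ and $t\int_{t_0}^t\xi(s)\,\rd s$ appearing in \eqref{rr3.44}.

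\medskip
The final step inserts the memory bound into the Lyapunov inequality and integrates. For $q=1$ the damping is effectively linear and $E(t)^{(q+1)/2}=E(t)$, so multiplying the resulting inequality by $\xi(t)$ and using a Young-type inequality for the conjugate pair $(G,G^{*})$ lets me absorb the memory contribution and close a differential inequality of the form $\mathcal F'(t)\le-k_1\,\xi(t)\,E(t)\,G'(\ve_1 E(t))=-k_1\,\xi(t)\,G_2(E(t))$ for a functional $\mathcal F$ equivalent to $E$. Substituting $z=\ve_1 E$ turns $G_2$ into the integrand $1/(z\,G'(z))$ of $G_1$, and separation of variables gives $G_1(\ve_1 E(t))\ge k_1\int_{t_0}^t\xi(s)\,\rd s$, which is exactly \eqref{7.1} after inverting the decreasing function $G_1$. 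For $q>1$ the degenerate damping no longer controls $E$ by $-E'(t)$ alone; here I would combine the memory bound with the integral estimate \eqref{r3.1} and the polynomial decay \eqref{r3.2} already in hand, using these to replace the superlinear term and to feed the factor $\si(t)\sim t$ into the argument of $G^{-1}$. Optimizing the resulting inequality over $[t_0,t]$ then produces the rate \eqref{rr3.44} written through $G_2^{-1}$. Throughout, the parameters $\ve_1$, $t_0$, $k_1$ and $k_2$ are fixed at the end so that the equivalences $\mathcal L\sim\mathcal F\sim E$ and all convexity inequalities hold simultaneously.
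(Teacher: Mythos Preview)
Your architecture---Jensen's inequality for the concave $G^{-1}$ followed by the convex-conjugate Young inequality $AB\le G^*(A)+G(B)$ to close a weighted Lyapunov estimate---is exactly the paper's route. The gap lies in how the weight $\si(t)=\int_0^t w(s)\,\rd s$ enters the two cases, and you have the roles inverted. For $q=1$ the bound $\si(t)\le C_0t$ is too weak: with a growing factor $t$ in the Jensen estimate, after multiplying by $G'(\ve_1E(t)/E(0))$ and applying Young, the contribution from $G^*$ is $C\,t\,\ve_1\,\tfrac{E(t)}{E(0)}G'\!\big(\ve_1E(t)/E(0)\big)$, which cannot be absorbed into $-\ep\,E(t)G'\!\big(\ve_1E(t)/E(0)\big)$ once $t$ is large; the inequality $\mathcal F'\le-k_1\xi(t)G_2(\,\cdot\,)$ fails to close. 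What rescues $q=1$ is precisely the estimate \eqref{r3.1} that you reserve for the other case: when $q=1$ it reads $\int_0^\infty E(s)\,\rd s\le CE(0)$, and since $w(s)\le C(E(t)+E(s))$ with $E$ nonincreasing, one obtains the \emph{uniform} bound $\si(t)\le C$ independent of $t$. Jensen then gives $(f\circ\nb u)(t)\le CG^{-1}\!\big(C\mu(t)/\xi(t)\big)$ with no growing prefactor, and the absorption succeeds.

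Conversely, for $q>1$ the estimate \eqref{r3.1} only controls $\int E^{(q+1)/2}$ and yields no uniform bound on $\si(t)$; here the paper simply accepts $\si(t)\le Ct$ and compensates by placing $1/t$ inside the weight, multiplying by $G'\!\big(\tfrac{\ve_1}{t}(E(t)/E(0))^{(q+1)/2}\big)$ rather than by $G'(\ve_1E/E(0))$. After Young's inequality the factor $t$ from Jensen cancels against the $1/t$ inside $G^*(G'(\cdot))$, and one integrates and exploits the monotonicity of $s\mapsto\big(E(s)/E(0)\big)^{(q+1)/2}G'\!\big(\tfrac{\ve_1}{s}(E(s)/E(0))^{(q+1)/2}\big)$---not an optimization over $[t_0,t]$---to reach \eqref{rr3.44}. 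Without this $1/t$ insertion your $q>1$ argument would hit the same absorption failure.
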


\begin{remark}
According to the assumptions on $G$ in (A5) and the above definitions of $G_1,G_2$, we see that $G_1$ is strictly decreasing and defines a bijection from $[0,f(0)]$ to $\ov{\BR_+}$, and $G_2$ is strictly increasing and defines a bijection from $\ov{\BR_+}$ to $\ov{\BR_+}$.
\end{remark}

\begin{remark}
Theorems \ref{theorem3.2}--\ref{the3.2} still hold for $p=2$, i.e., the polynomial nonlinearity in \eqref{1.1} degenerates to the linear one. The proofs are rather analogous and thus we omit them here.
\end{remark}

Next, we use three examples to illustrate Theorem \ref{the3.2}.

\begin{example}
Let the kernel function $f(t)$ takes the form of $f(t)=\al\,e^{-\be(1+t)}$, where $\al,\be>0$ are sufficiently small constants so that $f$ satisfies (A4). Then $f$ satisfies (A5) with $\xi(t)=\be$ and $G(t)=t$. Therefore, it follows from Theorem \ref{the3.2} and direct calculations that
$$
E(t)\le\begin{cases}
c_1\e^{-c_2t}, & q=1,\\
c_3t^{-\f2{q-1}}, & q>1,
\end{cases}
$$
where $c_1,c_2,c_3>0$ are some constants.
\end{example}

\begin{example}
Let $q=1$ and $f(t)=\al\exp(-t^\be)$, where $\be\in(0,1)$ and $\al>0$ are again small constants so that $f$ satisfies (A4). Then this time $f$ satisfies (A5) with $\xi(t)=1$ and
\[
G(t)=\f{\be\,t}{(\ln(\al/t))^{1/\be-1}}.
\]
By
$$
G'(t)=\f{(1-\be)+\be\ln(\al/t)}{(\ln(\al/t))^{1/\be}},\quad G''(t)=\f{(1-\be)(\ln(\al/t)+1/\be)}{(\ln(\al/t))^{\f1{\be+1}}},
$$
we see that $G$ also achieves the requirement in (A5) on $[0,\al]$. Then we deal with $G_1$ as
\begin{align*}
G_1(t) & =\int_t^\al\f1{s\,G'(s)}\,\rd s=\int_t^\al\f{(\ln(\al/s))^{1/\be}}{s(1-\be+\be\ln(\al/s))}\,\rd s=\int_0^{\ln(\al/t)}\f{\tau^{1/\be}}{1-\be+\be\tau}\,\rd\tau\\
& =\f1\be\int_0^{\ln(\al/t)}{\tau^{1/\be-1}}\left(\f\tau{\be^{-1}-1+\tau}\right)\rd\tau\le\f1\be\int_0^{\ln(\al/t)}\tau^{1/\be-1}\,\rd\tau=\left(\ln\f\al t\right)^{1/\be}.
\end{align*}
Then it follows from \eqref{7.1} that $E(t)\le c\exp(-c\,t^\be)$ with some constant $c>0$.
\end{example}

\begin{example}
Now let us consider $q>1$ and $f(t)=\al(1+t)^{-\be}$, where $\alpha>0$ and $\be>1$ are suitably chosen so that (A4) holds. Similarly as before, $f$ satisfies (A5) with $\xi(t)=1$ and $G(t)=t^\f{\be+1}\be$. Then again $G$ achieves the condition in (A5) hold and it follows from \eqref{rr3.44} that
$$
E(t)\le c\,t^{\min\{-\f2{q+1},\f{2(q-1)}{(q+1)^2}\}},
$$
with some constant $c>0$.
\end{example}

From the above examples, it is evident that the rates of energy decay can be significantly faster than those stated in Theorem \ref{theorem3.2} when $q=1$ and $G(t)$ is linear in the case of $q>1$. However, when $G(t)$ is nonlinear in the case of $q>1$, the rate of energy decay of the solution does not appear to increase substantially due to the weak dissipation conditions.

\section{Proof of Theorem \ref{theorem3.2}}\label{sec-proof1}

In the sequel, by $C>0$ we denote generic constants which may change from line to line. In order to prove Theorem \ref{theorem3.2}, we need to introduce some auxiliary functions and lemmas. First we define
\[
F_1(t):=\int_0^t F(t-s)\int_\Om a|\nb u(s)|^2\,\rd x\rd s,\quad F_2(t):=M(\de)(\de F_1(t)+E(t)),
\]
where $F(t),M(\de)$ were defined in \eqref{eq-def-FM}. We recall the following result.

\begin{lemma}[see {\cite[Lemma 2.1]{22}}]\label{le3.1}
For $t\ge0,$ the functions $F_1(t),F_2(t)$ defined above satisfy
\begin{align}
\f\rd{\rd t}F_1(t) & \le-\f12(f\circ\nb u)(t)+2\la_0\|\nb u(t)\|_2^2,\label{r3.3}\\
\f\rd{\rd t}F_2(t) & \le-\f12M(\de)\int_0^t K_\de(t-s)f(t-s)\int_\Om a|\nb(u(t)-u(s))|^2\,\rd x\rd s+2\la_0M(\de)\de\|\nb u(t)\|_2^2,\label{r3.4}
\end{align}
where $(f\circ\nb u)(t)$ was defined in \eqref{eq-f-grad} and $\la_0$ was the constant in \eqref{1.2}. Moreover, we have
\begin{equation}\label{rr3.5}
M(\de)\de\longrightarrow0\quad\mbox{as }\de\to0.
\end{equation}
\end{lemma}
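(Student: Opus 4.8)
The plan is to establish the three assertions separately, each time differentiating the auxiliary functional by the Leibniz rule and then recognizing the memory contributions through the identity $K_\de(s)f(s)=-f'(s)+\de f(s)$ built into the definition of $K_\de$. Claim \eqref{r3.4} will follow by combining \eqref{r3.3} with the energy identity \eqref{2.6}, and \eqref{rr3.5} is an independent dominated-convergence statement.

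For \eqref{r3.3} I would first differentiate $F_1(t)=\int_0^t F(t-s)\int_\Om a|\nb u(s)|^2\,\rd x\,\rd s$. Using the Leibniz rule together with $F(0)=\int_0^\infty f$ and $F'=-f$ gives
$$
\f\rd{\rd t}F_1(t)=\left(\int_0^\infty f(\tau)\,\rd\tau\right)\int_\Om a|\nb u(t)|^2\,\rd x-\int_0^t f(t-s)\int_\Om a|\nb u(s)|^2\,\rd x\,\rd s.
$$
The crucial point is to estimate the negative convolution term from above so that a multiple of $-(f\circ\nb u)(t)$ appears with the correct sign. For this I would invoke the pointwise inequality $|\nb u(s)|^2\ge\f12|\nb u(t)-\nb u(s)|^2-|\nb u(t)|^2$, which is equivalent to $|\nb u(t)+\nb u(s)|^2\ge0$; multiplying by $a\ge0$ and integrating converts $-\int_0^t f(t-s)\int_\Om a|\nb u(s)|^2\,\rd x\,\rd s$ into $-\f12(f\circ\nb u)(t)+\big(\int_0^t f\big)\int_\Om a|\nb u(t)|^2\,\rd x$. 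Collecting the two leading gradient terms, bounding $\int_0^t f\le\int_0^\infty f$, and using the defining relation $\|a\|_\infty\int_0^\infty f=\la_0-\ell<\la_0$ from (A4) then controls the remainder by $2\la_0\|\nb u(t)\|_2^2$, which is \eqref{r3.3}.

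For \eqref{r3.4} I would differentiate $F_2=M(\de)(\de F_1+E)$, insert \eqref{r3.3} for $F_1'$ and the identity \eqref{2.6} for $E'$. The two memory terms $\f12(f'\circ\nb u)(t)$ coming from $E'$ and $-\f\de2(f\circ\nb u)(t)$ coming from $\de F_1'$ combine precisely into $-\f12\int_0^t K_\de(t-s)f(t-s)\int_\Om a|\nb(u(t)-u(s))|^2\,\rd x\,\rd s$ by the definition $K_\de f=-f'+\de f$; the surviving gradient term becomes $2\la_0\de\|\nb u(t)\|_2^2$ after multiplying by $M(\de)$, while the two leftover terms $-\f{f(t)}2\int_\Om a|\nb u(t)|^2\,\rd x$ and $-\int_\Om b\,h(u_t)u_t\,\rd x$ are nonpositive by (A2)--(A4) and are simply discarded. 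For \eqref{rr3.5} I would write $M(\de)\de=\int_0^\infty\f\de{K_\de(s)}f(s)\,\rd s$ and apply dominated convergence: since $K_\de\ge\de$ we have $0\le\de/K_\de\le1$, so the integrand is dominated by $f\in L^1(\BR_+)$, and wherever $f'(s)<0$ one has $\de/K_\de(s)\to0$, giving pointwise convergence to $0$ almost everywhere.

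The main obstacle I anticipate lies in the first step: extracting $-\f12(f\circ\nb u)(t)$ with the correct negative sign. The more natural splitting $|\nb u(s)|^2\ge\f12|\nb u(t)|^2-|\nb u(t)-\nb u(s)|^2$ produces $+(f\circ\nb u)(t)$ and is therefore useless here, so the specific elementary inequality above must be chosen deliberately. Once this sign and the relation $\|a\|_\infty\int_0^\infty f<\la_0$ are secured, the matching of the $K_\de f$ combination in \eqref{r3.4} and the discarding of the nonpositive terms are routine bookkeeping.
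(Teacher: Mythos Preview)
The paper itself does not give a proof of this lemma: it is quoted verbatim from \cite[Lemma~2.1]{22}, so there is nothing in the present manuscript to compare your argument against. Your derivations of \eqref{r3.3} and \eqref{r3.4} are correct and are the natural ones: the Leibniz differentiation of $F_1$, the elementary inequality $|\nb u(s)|^2\ge\tfrac12|\nb u(t)-\nb u(s)|^2-|\nb u(t)|^2$ (equivalently $|\nb u(t)+\nb u(s)|^2\ge0$) to produce the $-\tfrac12(f\circ\nb u)(t)$ with the right sign, and the recombination $f'-\de f=-K_\de f$ after inserting \eqref{2.6} are exactly the mechanism behind the cited result.

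There is, however, a gap in your treatment of \eqref{rr3.5}. Your dominated convergence argument is set up correctly ($0\le\de/K_\de\le1$, majorant $f\in L^1$), but the assertion ``pointwise convergence to $0$ almost everywhere'' is not justified by assumption (A4) alone. At any point where $f(s)>0$ and $f'(s)=0$ one has $\de/K_\de(s)=1$ for every $\de$, so the limit of $M(\de)\de$ is actually $\int_{\{f>0,\,f'=0\}}f(s)\,\rd s$, which need not vanish for a merely nonincreasing $C^1$ kernel (take $f$ constant and positive on an interval). To close the argument you need the extra hypothesis that $f'<0$ almost everywhere on $\{f>0\}$; this is implicit in the framework of \cite{22} and is automatically satisfied under (A5) (since $f'\le-\xi\,G(f)<0$ whenever $f>0$), but it does not follow from (A4) as stated. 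You should flag this additional requirement rather than claim a.e.\ convergence outright.
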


Next, let us construct a nonnegative function $\wt a\in C^1(\ov\Om)$ satisfying $\supp\,\wt a\subset\supp\,a$ and
$$
\wt a(x)=\begin{cases}
a(x) &\mbox{if }a(x)\ge\ka/2,\\
0 &\mbox{if }a(x)\le\ka/4.
\end{cases}
$$
Then assumption (A2) gives
\begin{equation}\label{4.1}
\wt a+b\ge\f\ka2\quad\mbox{in }\Om.
\end{equation}
We invoke a useful result.

\begin{lemma}[see \cite{A3}]\label{ref1}
Let $\Om_1,\Om_2$ be subsets of $\Om$ with positive measures such that
\[
\ov\Om_1\subset\Om_2\subset\Om,\quad\mathrm{meas}(\pa\Om_2\cap\pa\Om)>0.
\]
Then there exists a constant $C_0>0$ such that
\[
\int_{\Om_1}|w|^2\,\rd x\le C_0\int_{\Om_2}|\nb w|^2\,\rd x,\quad\forall\,w\in H_0^1(\Om).
\]
\end{lemma}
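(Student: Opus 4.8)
The plan is to recognize this as a Poincar\'e--Friedrichs inequality on the intermediate domain $\Om_2$ for functions whose trace vanishes on the positive-measure portion $\Ga:=\pa\Om_2\cap\pa\Om$ of its boundary. First I would discard $\Om_1$ altogether: since $\ov\Om_1\subset\Om_2$, one has the trivial monotonicity
\[
\int_{\Om_1}|w|^2\,\rd x\le\int_{\Om_2}|w|^2\,\rd x,
\]
so it suffices to produce a constant $C_0>0$ with $\int_{\Om_2}|w|^2\,\rd x\le C_0\int_{\Om_2}|\nb w|^2\,\rd x$ for every $w\in H_0^1(\Om)$. The only property of such $w$ that I would carry onto $\Om_2$ is that its trace on $\Ga$ vanishes (because the trace of $w$ on all of $\pa\Om$ is zero), so the restriction $w|_{\Om_2}$ lies in the closed subspace $V:=\{v\in H^1(\Om_2):v=0\text{ on }\Ga\}$.

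Next I would establish the Poincar\'e inequality on $V$ by a compactness-and-contradiction argument. Suppose it fails; then there is a sequence $v_k\in V$ with $\|v_k\|_{L^2(\Om_2)}=1$ and $\|\nb v_k\|_{L^2(\Om_2)}\to0$. The sequence is therefore bounded in $H^1(\Om_2)$, and by the Rellich--Kondrachov theorem (using that $\Om_2$ is a sufficiently regular subdomain of the smooth domain $\Om$) a subsequence converges strongly in $L^2(\Om_2)$ and weakly in $H^1(\Om_2)$ to some limit $v$. Weak lower semicontinuity of the Dirichlet energy gives $\nb v=0$, so $v$ is constant on each connected component of $\Om_2$, while the strong $L^2$ convergence yields $\|v\|_{L^2(\Om_2)}=1$, so $v$ is a \emph{nonzero} constant on the component meeting $\Ga$. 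Since $V$ is a closed linear subspace of $H^1(\Om_2)$, hence weakly closed, the weak limit again belongs to $V$, i.e.\ $v=0$ on $\Ga$. A nonzero constant cannot vanish on the positive-measure set $\Ga$, which is the desired contradiction. Hence $C_0$ exists on $V$, and combining with the monotonicity above closes the proof with the same $C_0$.

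The step requiring the most care is the passage to the limit on $\Ga$: I must guarantee both that the vanishing-trace condition survives in the weak limit (which is why I phrase it through the weakly closed subspace $V$ rather than through continuity of the trace map alone) and that $\mathrm{meas}(\Ga)>0$ genuinely excludes a nonzero constant. A secondary technical point is the regularity needed on $\Om_2$ for the compact embedding and the trace theorem to apply; I would either assume $\Om_2$ to be Lipschitz (or at least an extension domain), or absorb this into the standing geometric hypotheses implicit in the cited reference. Finally, if $\Om_2$ were disconnected the statement could fail on a component not abutting $\Ga$; I would therefore assume $\Om_2$ connected (or that each of its components meets $\Ga$), which suffices for the application to $\wt a$ constructed above.
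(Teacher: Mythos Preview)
Your argument is the standard compactness-and-contradiction proof of the Poincar\'e--Friedrichs inequality, and it is correct under the mild regularity and connectedness caveats you yourself flag. The paper does not supply its own proof of this lemma at all: it is stated with the tag ``see \cite{A3}'' (Adams--Fournier, \emph{Sobolev Spaces}) and used as a black box. So there is nothing to compare your argument against beyond noting that you have reproduced precisely the textbook proof the authors are invoking.
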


According to assumption (A2), there exist a constant $\ve_0>0$ and a subset $V\subset\ov\Om$ such that
\[
\mathrm{meas}(\pa V\cap\pa\Om)>0,\quad a\ge\ve_0\mbox{ in }V.
\]
Setting
\[
\Om_1:=\supp\,\wt a,\quad\Om_2:=\{x\in\Om\mid a(x)>\min\{\ka/4,\ve_0\}\},
\]
we see that $\Om_1,\Om_2$ satisfy the assumption of Lemma \ref{ref1}. Then for any $w\in H_0^1(\Om)$, we employ Lemma \ref{ref1} to deduce
\begin{align}
\int_\Om\wt a|w|^2\,\rd x & =\int_{\Om_1}\wt a|w|^2\,\rd x\le\|\wt a\|_\infty\int_{\Om_1}|w|^2\,\rd x\le\|\wt a\|_\infty C_0\int_{\Om_2}|\nb w|^2\,\rd x\nonumber\\
& \le C\int_{\Om_2}a|\nb w|^2\,\rd x\le C\int_\Om a|\nb w|^2\,\rd x.\label{4.2}
\end{align}

Now we introduce further auxiliary functions
$$
F_3(t):=\int_\Om u_t(t)u(t)\,\rd x,\quad F_4(t):=-\int_0^t f(t-s)\int_\Om\wt a\,u_t(t)(u(t)-u(s))\,\rd x\rd s.
$$
In the next two lemmas, we provide estimates for the derivatives of the above functions.

\begin{lemma}\label{le3.2}
Given $(u^0,u^1)\in H_0^1(\Om)\times L^2(\Om)$ and under the assumptions in Lemma $\ref{lemma 2.7},$ the function $F_3(t)$ defined above satisfies the following estimate
\begin{align}
\f\rd{\rd t}F_3(t) & \le\|u_t(t)\|_2^2-\f\ell2\|\nb u(t)\|_2^2+\f{\|a\|_\infty}\ell\int_\Om a\left|\int_0^t f(t-s)\nb(u(s)-u(t))\,\rd s\right|^2\rd x\nonumber\\
& \quad\,+\f{\|b\|_\infty B_2}\ell\int_\Om b\,h(u_t(t))^2\,\rd x+\int_\Om k|u(t)|^p \,\rd x,\label{rr3.3}
\end{align}
where $\ell$ and $B_2$ were the constants introduced in \eqref{2.1} and Lemma $\ref{lemma3}$ with $r=2,$ respectively.
\end{lemma}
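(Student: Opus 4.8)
The plan is to differentiate $F_3(t)=\int_\Om u_t u\,\rd x$ directly and substitute the governing equation \eqref{1.1} to express $\int_\Om u_{tt}u\,\rd x$ in terms of the spatial operators. By the product rule, $\f\rd{\rd t}F_3(t)=\|u_t(t)\|_2^2+\int_\Om u_{tt}u\,\rd x$. From \eqref{1.1} we have $u_{tt}=\rdiv(A\nb u)-\int_0^t f(t-s)\,\rdiv(a\nb u(s))\,\rd s-b\,h(u_t)+k|u|^{p-2}u$, and after multiplying by $u$ and integrating by parts (using the boundary condition $u=0$ on $\pa\Om$), the first principal term yields $-\int_\Om A\nb u\cdot\nb u\,\rd x\le-\la_0\|\nb u\|_2^2$ by (A1), and the nonlinear term yields exactly $\int_\Om k|u|^p\,\rd x$. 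This accounts for the clean terms in \eqref{rr3.3}.

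The main work is estimating the two remaining contributions: the memory term and the damping term. For the memory term, after integration by parts it becomes $\int_0^t f(t-s)\int_\Om a\nb u(s)\cdot\nb u(t)\,\rd x\rd s$; I would rewrite $\nb u(s)=\nb u(t)+(\nb u(s)-\nb u(t))$ to split it into $\bigl(\int_0^t f(s)\,\rd s\bigr)\int_\Om a|\nb u(t)|^2\,\rd x$ plus a cross term $\int_\Om a\nb u(t)\cdot\int_0^t f(t-s)(\nb u(s)-\nb u(t))\,\rd s\,\rd x$. The first part, combined with the principal term $-\la_0\|\nb u\|_2^2$ and using the definition of $\ell$ in \eqref{2.1}, should give $-\ell\|\nb u\|_2^2$ up to the cross term. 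The cross term is then handled by Young's inequality $a\cdot b\le\f\ep2|a|^2+\f1{2\ep}|b|^2$ applied pointwise in $x$ with the weight $a$; choosing $\ep$ so that half of the $\ell\|\nb u\|_2^2$ is absorbed leaves $-\f\ell2\|\nb u\|_2^2$ and produces the term $\f{\|a\|_\infty}\ell\int_\Om a\bigl|\int_0^t f(t-s)\nb(u(s)-u(t))\,\rd s\bigr|^2\rd x$, matching \eqref{rr3.3}.

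For the damping term $-\int_\Om b\,h(u_t)u\,\rd x$, I would again use Young's inequality to separate the $u$ factor from $b\,h(u_t)$. Writing $b\,h(u_t)\,u=(\sqrt b\,u)(\sqrt b\,h(u_t))$ and applying Young's inequality gives a bound by $\f\ell4\int_\Om b\,u^2\,\rd x+\f C\ell\int_\Om b\,h(u_t)^2\,\rd x$; since $b\le\|b\|_\infty$ and by Lemma \ref{lemma3} with $r=2$ we have $\int_\Om b\,u^2\,\rd x\le\|b\|_\infty B_2\|\nb u\|_2^2$, the first piece is absorbed into the available $\f\ell2\|\nb u\|_2^2$ (or folded into the coefficient bookkeeping), while the second piece becomes $\f{\|b\|_\infty B_2}\ell\int_\Om b\,h(u_t)^2\,\rd x$, matching the stated estimate.

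The step I expect to be most delicate is the bookkeeping of the constant $\ell$ across the principal, memory, and damping terms simultaneously: each of the three Young-type splittings consumes a fraction of $\la_0\|\nb u\|_2^2$, and the precise choice of the Young parameters must be coordinated so that exactly $-\f\ell2\|\nb u\|_2^2$ survives while the coefficients of the memory and damping remainder terms come out as $\f{\|a\|_\infty}\ell$ and $\f{\|b\|_\infty B_2}\ell$ respectively. I would carry out these three estimates with explicit parameters and only at the end optimize them to land on the stated constants.
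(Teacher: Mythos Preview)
Your proposal is correct and follows essentially the same route as the paper: differentiate $F_3$, substitute the equation, integrate by parts, bound the principal part by $-\la_0\|\nb u\|_2^2$, split the memory integrand via $\nb u(s)=\nb u(t)+(\nb u(s)-\nb u(t))$, and then apply two weighted Young inequalities (with $\sqrt a$ and $\sqrt b$) together with Lemma~\ref{lemma3} for $r=2$. The only refinement is in the bookkeeping you already flagged: the paper allots $\ell/4$ each to the memory cross term and the damping term (choosing $\ep_1=\f\ell{4\|a\|_\infty}$ and $\ep_2=\f\ell{4\|b\|_\infty B_2}$), which is exactly what yields the stated coefficients $\f{\|a\|_\infty}\ell$ and $\f{\|b\|_\infty B_2}\ell$ and leaves $-\f\ell2\|\nb u\|_2^2$.
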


\begin{proof}
By the governing equation and the homogeneous boundary condition in \eqref{1.1}, we utilize the divergence theorem to calculate
\begin{align}
\f\rd{\rd t}F_3(t) & =\int_\Om\left\{u_{tt}(t)u(t)+u_t(t)^2\right\}\rd x\nonumber\\
& =\|u_t(t)\|_2^2+\int_\Om\left\{\rdiv(A\nb u(t))-\int_0^t f(t-s)\,\rdiv(a\nb u(s))\,\rd s-b\,h(u_t(t))\right\}u(t)\,\rd x\nonumber\\
& \quad\,+\int_\Om k|u(t)|^p\,\rd x\nonumber\\
& =\|u_t(t)\|_2^2+I_0(t)+I_1(t)+I_2(t)+\int_\Om k|u(t)|^p\,\rd x,\label{eq-dF3}
\end{align}
where
\begin{equation}\label{eq-est1}
I_0(t):=-\int_\Om A\nb u(t)\cdot\nb u(t)\,\rd x\le-\la_0\|\nb u(t)\|_2^2,
\end{equation}
by assumption (A1), and
\[
I_1(t):=\int_\Om a\nb u(t)\cdot\int_0^t f(t-s)\nb u(s)\,\rd s\rd x,\quad I_2(t):=-\int_\Om b\,h(u_t(t))u(t)\,\rd x.
\]

For $I_1(t)$, we separate $u(s)=u(t)+(u(s)-u(t))$ and use assumptions (A2) and (A4) to estimate
\begin{align}
I_1(t) & =\int_\Om a\nb u(t)\cdot\int_0^t f(t-s)\nb u(t)\,\rd s\rd x+\int_\Om a\nb u(t)\cdot\int_0^t f(t-s)\nb(u(s)-u(t))\,\rd s\rd x\nonumber\\
& =\int_0^t f(s)\,\rd s\int_\Om a|\nb u(t)|^2\,\rd x+\int_\Om\sqrt a\,\nb u(t)\cdot\sqrt a\int_0^t f(t-s)\nb(u(s)-u(t))\,\rd s\rd x\label{eq-est2}\\
& \le\int_0^\infty f(t)\,\rd t\int_\Om a|\nb u(t)|^2\,\rd x+\ep_1\int_\Om a|\nb u(t)|^2\,\rd x\nonumber\\
& \quad\,+\f1{4\ep_1}\int_\Om a\left|\int_0^t f(t-s)\nb(u(s)-u(t))\,\rd s\right|^2\rd x\nonumber\\
& \le\|a\|_\infty\left(\int_0^\infty f(t)\,\rd t+\ep_1\right)\|\nb u(t)\|_2^2+\f1{4\ep_1}\int_\Om a\left|\int_0^t f(t-s)\nb(u(s)-u(t))\,\rd s\right|^2\rd x.\label{eq-est3}
\end{align}
Here we applied Cauchy's inequality to treat the second term in \eqref{eq-est2}, where $\ep_1>0$ is a parameter to be determined later. In the same manner, we estimate $I_2(t)$ as
\begin{align}
I_2(t) & \le\int_\Om\sqrt b\,|h(u_t(t))|\sqrt b\,|u(t)|\,\rd x\le\ep_2\int_\Om b\,u(t)^2\,\rd x+\f1{4\ep_2}\int_\Om b\,h(u_t(t))^2\,\rd x\nonumber\\
& \le\ep_2\|b\|_\infty\|u(t)\|_2^2+\f1{4\ep_2}\int_\Om b\,h(u_t(t))^2\,\rd x\nonumber\\
& \le\ep_2\|b\|_\infty B_2\|\nb u(t)\|_2^2+\f1{4\ep_2}\int_\Om b\,h(u_t(t))^2\,\rd x\label{eq-est4}
\end{align}
again with a parameter $\ep_2>0$ to be determined later, where we utilized Lemma \ref{lemma3} with $r=2$ to deduce the last inequality.

Recalling the definition of $\ell$ in \eqref{2.1}, we substitute inequalities \eqref{eq-est1}, \eqref{eq-est3} and \eqref{eq-est4} into \eqref{eq-dF3} to derive
\begin{align*}
\f\rd{\rd t}F_3(t) & \le\|u_t(t)\|_2^2-\left|\ell-\|a\|_\infty\ep_1-\|b\|_\infty B_2\ep_2\right|\|\nb u(t)\|_2^2+\int_\Om k|u(t)|^p \,\rd x\\
& \quad\,+\f1{4\ep_1}\int_\Om a\left|\int_0^t f(t-s)\nb(u(s)-u(t))\,\rd s\right|^2\rd x+\f1{4\ep_2}\int_\Om b\,h(u_t(t))^2\,\rd x
\end{align*}
Consequently, we arrive at \eqref{rr3.3} by simply choosing
\[
\ep_1=\f\ell{4\|a\|_\infty},\quad\ep_2=\f\ell{4\|b\|_\infty B_2},
\]
which completes the proof.
\end{proof}

\begin{lemma}\label{le3.3}
Given $(u^0,u^1)\in H_0^1(\Om)\times L^2(\Om)$ and under the assumptions in Lemma $\ref{lemma 2.7},$ the function $F_4(t)$ defined above satisfies the following estimate
\begin{align}
\f\rd{\rd t}F_4(t) & \le\ve\|u_t(t)\|_2^2+\ve(1+B_{2(p-1)}\eta_1)\|\nb u(t)\|_2^2+C(\ve)\int_\Om a\left|\int_0^t f(t-s)\nb(u(t)-u(s))\,\rd s\right|^2\rd x\nonumber\\
& \quad\,-\int_0^t f(s)\,\rd s\int_\Om\wt a\,u_t(t)^2\,\rd x+C(\ve)\int_\Om b\,h^2(u_t(t))\,\rd x-\f{f(0)C}{4\ve}(f'\circ\nb u)(t).\label{xx3.1}
\end{align}
Here $\ve\in(0,1)$ is an arbitrary constant, $C(\ve)>0$ is a constant depending on $\ve$ and
$$
\eta_1:=\left(\f{2(1+\wt C)E(0)}\ell\right)^{p-2},
$$
where $\wt C$ was the constant introduced in Lemma $\ref{lemma2.8}$.
\end{lemma}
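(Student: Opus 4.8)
The plan is to differentiate $F_4(t)$ in time, replace $u_{tt}$ by the right-hand side of the governing equation in \eqref{1.1}, integrate by parts in space, and then bound every resulting term by Cauchy's and Young's inequalities while keeping $\wt a$ as a weight, so that the localized estimate \eqref{4.2} turns it into the weighted memory quantity. All manipulations are carried out formally and justified on the Galerkin approximations used in Proposition \ref{theorem1}. Abbreviating $\Phi(t):=\int_0^t f(t-s)(u(t)-u(s))\,\rd s$, so that $F_4(t)=-\int_\Om\wt a\,u_t(t)\Phi(t)\,\rd x$, I first compute $\f{\rd}{\rd t}F_4=-\int_\Om\wt a\,u_{tt}\Phi\,\rd x-\int_\Om\wt a\,u_t\,\pa_t\Phi\,\rd x$, where Leibniz's rule gives $\pa_t\Phi=\int_0^t f'(t-s)(u(t)-u(s))\,\rd s+u_t(t)\int_0^t f(s)\,\rd s$ (the endpoint term at $s=t$ vanishing). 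The second summand contributes exactly the good dissipative term $-\int_0^t f(s)\,\rd s\int_\Om\wt a\,u_t^2\,\rd x$ appearing in \eqref{xx3.1}, which I leave untouched.

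For the first summand of $\pa_t\Phi$ I would apply Cauchy's inequality to bound $-\int_\Om\wt a\,u_t\int_0^t f'(t-s)(u(t)-u(s))\,\rd s\,\rd x$ by $\ve\int_\Om\wt a\,u_t^2\,\rd x$ plus $\f{C}{4\ve}\int_\Om\wt a\,w^2\,\rd x$ with $w:=\int_0^t f'(t-s)(u(t)-u(s))\,\rd s$. Applying \eqref{4.2} to $w$ and then a weighted Cauchy--Schwarz in $s$ with the measure $-f'(t-s)\,\rd s$, whose total mass is $\int_0^t(-f'(t-s))\,\rd s=f(0)-f(t)\le f(0)$, bounds $\int_\Om a|\nb w|^2\,\rd x$ by $-f(0)(f'\circ\nb u)(t)$; this is precisely how the factor $f(0)$ and the quantity $(f'\circ\nb u)(t)$ in \eqref{xx3.1} arise, while $\int_\Om\wt a\,u_t^2\,\rd x\le\|a\|_\infty\|u_t\|_2^2$ feeds the $\ve\|u_t\|_2^2$ term.

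The main work lies in $-\int_\Om\wt a\,u_{tt}\Phi\,\rd x$. Substituting $u_{tt}$ from \eqref{1.1} and integrating by parts (both $u$ and $\Phi$ vanish on $\pa\Om$) yields four groups, in which the differential operators act on $\wt a\Phi$ through $\nb(\wt a\Phi)=\Phi\,\nb\wt a+\wt a\,\nb\Phi$ with $\nb\Phi=\int_0^t f(t-s)\nb(u(t)-u(s))\,\rd s$. The $\wt a\,\nb\Phi$ pieces, estimated by Cauchy's inequality with a small parameter, give a small multiple of $\|\nb u\|_2^2$ and, since $\wt a\le a$, a $C(\ve)$-multiple of $\int_\Om a|\nb\Phi|^2\,\rd x$; the $\Phi\,\nb\wt a$ pieces are controlled because $\supp\nb\wt a\subset\{a\ge\ka/4\}$, so $\int_\Om\Phi^2|\nb\wt a|^2\,\rd x$ is bounded via Lemma \ref{ref1} by $C\int_\Om a|\nb\Phi|^2\,\rd x$ as in the derivation of \eqref{4.2}. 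The damping group gives $C(\ve)\int_\Om b\,h^2(u_t)\,\rd x$ after a $\sqrt{b}$-splitting. The delicate group is the nonlinear one $-\int_\Om\wt a\,\Phi\,k|u|^{p-2}u\,\rd x$: writing it as $(\sqrt{\wt a}\,\Phi)(\sqrt{\wt a}\,k|u|^{p-2}u)$ and using Young's inequality with a \emph{large} parameter (of order $1/\ve$) keeps the $\int_\Om\wt a\,\Phi^2\,\rd x$ part inside $C(\ve)\int_\Om a|\nb\Phi|^2\,\rd x$ while leaving a small factor on $\int_\Om\wt a\,k^2|u|^{2(p-1)}\,\rd x$; this last integral is at most $\|\wt a\|_\infty K^2 B_{2(p-1)}\|\nb u\|_2^{2(p-1)}$ by Lemma \ref{lemma3} with $r=2(p-1)$ (admissible exactly because $p\le\f{2n-2}{n-2}$), and extracting $\|\nb u\|_2^{2(p-2)}\le\eta_1$ from the a priori bound $\|\nb u\|_2^2\le\f{2(1+\wt C)E(0)}{\ell}$ supplied by \eqref{eq-lower1} and \eqref{r2} produces the factor $\ve B_{2(p-1)}\eta_1\|\nb u\|_2^2$. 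Collecting all contributions and choosing the Cauchy/Young parameters proportional to $\ve$ (or $1/\ve$) gives \eqref{xx3.1}.

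I expect the principal obstacle to be twofold. First, the time differentiation of the double time-and-space convolution defining $F_4$ must be executed carefully so that exactly one good dissipative term $-\int_0^t f(s)\,\rd s\int_\Om\wt a\,u_t^2\,\rd x$ is isolated with the correct sign, the remaining convolution of $f'$ being routed into $(f'\circ\nb u)(t)$. Second, the terms carrying $\nb\wt a$ produced by the spatial integration by parts would, a priori, leave an \emph{unweighted} gradient of $\Phi$; it is only the support condition $\supp\nb\wt a\subset\{a>0\}$ together with Lemma \ref{ref1} that lets me recast them as the weighted memory term $\int_\Om a|\nb\Phi|^2\,\rd x$ rather than an uncontrolled quantity, and getting this localization right is the crux of the argument.
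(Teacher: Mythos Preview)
Your proposal is correct and follows essentially the same route as the paper: differentiate $F_4$, substitute the equation for $u_{tt}$, split $\nb(\wt a\Phi)=\wt a\,\nb\Phi+\Phi\,\nb\wt a$, and bound each of the resulting pieces by Cauchy/Young together with the localization \eqref{4.2}; the paper labels these pieces $I_3$--$I_{10}$ and treats them exactly as you outline, including the $u(s)=u(t)+(u(s)-u(t))$ split for the memory term and the Sobolev embedding with $r=2(p-1)$ followed by $\|\nb u\|_2^{2(p-2)}\le\eta_1$ for the nonlinear term. The only cosmetic difference is that the paper applies Cauchy to $I_9$ directly with $\ve\|u_t\|_2^2$ rather than $\ve\int_\Om\wt a\,u_t^2\,\rd x$, which is immaterial.
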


\begin{proof}
Similarly to the proof of Lemma \ref{le3.2}, we differentiate $F_4(t)$ by its definition and exploit problem \eqref{1.1} to calculate
\begin{align*}
\f\rd{\rd t}F_4(t) & =-\int_\Om u_{tt}(t)\wt a\int_0^t f(t-s)(u(t)-u(s))\,\rd s\rd x\\
& \quad\,-\int_\Om\wt a\,u_t(t)\int_0^t f'(t-s)(u(t)-u(s))\,\rd s\rd x -\int_0^t f(s)\,\rd s\int_\Om\wt a\,u_t(t)^2\,\rd x\\
& =\int_\Om\left\{-\rdiv(A\nb u(t))+\int_0^t f(t-s)\,\rdiv(a\nb u(s))\,\rd s+b\,h(u_t(t))-k\,|u(t)|^{p-2}u(t)\right\}\\
& \qquad\:\:\times\wt a\int_0^t f(t-s)(u(t)-u(s))\,\rd s\rd x+I_9(t)+I_{10}(t)\\
& =\sum_{i=3}^{10}I_i(t),
\end{align*}
where
\begin{align*}
I_3(t) & :=\int_\Om\wt a\,A\nb u(t)\cdot\int_0^t f(t-s)\nb(u(t)-u(s))\,\rd s\rd x,\\
I_4(t) & :=\int_\Om\nb\wt a\cdot A\nb u(t)\int_0^t f(t-s)(u(t)- u(s))\,\rd s\rd x,\\
I_5(t) & :=-\int_\Om\wt a\int_0^t f(t-s)a\nb u(s)\,\rd s\cdot\int_0^t f(t-s)\nb(u(t)-u(s))\,\rd s\rd x,\\
I_6(t) & :=-\int_\Om\nb\wt a\cdot\int_0^t f(t-s)a\nb u(s)\,\rd s\int_0^t f(t-s)(u(t)-u(s))\,\rd s\rd x,\\
I_7(t) & :=\int_\Om b\,h(u_t(t))\left(\int_0^t f(t-s)\wt a(u(t)-u(s))\,\rd s\right)\rd x,\\
I_8(t) & :=-\int_\Om\wt a\,k|u(t)|^{p-2}u(t)\left(\int_0^t f(t-s)(u(t)-u(s))\,\rd s\right)\rd x,\\
I_9(t) & :=-\int_\Om\wt a\,u_t(t)\int_0^t f'(t-s)(u(t)-u(s))\,\rd s\rd x,\quad
I_{10}(t):=-\int_0^t f(s)\,\rd s\int_\Om\wt a\,u_t(t)^2\,\rd x.
\end{align*}
We notice that $I_{10}(t)$ already appears in the desired estimate \eqref{xx3.1}. Then we shall estimate the remaining terms above one by one.

Using Young's inequality and \eqref{4.2}, one has
\begin{align}
|I_3(t)| & \le\int_\Om\left|\sqrt{\wt a}\,\nb u(t)\cdot\sqrt{\wt a}\,A\int_0^t f(t-s)\nb(u(t)-u(s))\,\rd s\right|\rd x\nonumber\\
& \le\f\ve{4\|\wt a\|_\infty}\int_\Om\wt a\,|\nb u(t)|^2\,\rd x+\f{\|\wt a\|_\infty}\ve\int_\Om\wt a\left|A\int_0^t f(t-s)\nb(u(t)-u(s))\,\rd s\right|^2\rd x\nonumber\\
& \le\f\ve4\|\nb u(t)\|_2^2+\f{\|\wt a\|_\infty\wt\la}\ve\int_\Om a\left|\int_0^t f(t-s)\nb(u(t)-u(s))\,\rd s\right|^2\rd x,\label{est-I3}
\end{align}
where $\wt\la:=\max_{x\in\ov\Om}\|A(x)\|_{\ell^2}^2$ and $\|A(x)\|_{\ell^2}$ denotes the matrix $2$-norm of $A(x)\in\BR^{n\times n}$, i.e., $|A(x)\xi|\le\|A(x)\|_{\ell^2}|\xi|$ for any $\xi\in\BR^n$. In the same manner and employing the definition of $\wt a$ and \eqref{4.2}, we deduce that
\begin{align*}
|I_4(t)| & \le\f\ve4\|\nb u(t)\|_2^2+\f1\ve\int_\Om \left|\nb\wt a\,A\int_0^t f(t-s)(u(t)-u(s))\,\rd s\right|^2\rd x\\
& \le\f\ve4\|\nb u(t)\|_2^2+\f{\|\nb a\|_\infty^2\wt\la}\ve\int_{\Om_1}\left|\int_0^t f(t-s)(u(t)-u(s))\,\rd s\right|^2\rd x\\
& \le\f\ve4\|\nb u(t)\|_2^2+\f{C\|\nb a\|_\infty^2\wt\la}\ve\int_{\Om_2}a\left|\int_0^t f(t-s)\nb(u(t)-u(s))\,\rd s\right|^2\rd x\\
& \le\f\ve4\|\nb u(t)\|_2^2+\f{C\|\nb a\|_\infty^2\wt\la}\ve\int_\Om a\left|\int_0^t f(t-s)\nb(u(t)-u(s))\,\rd s\right|^2\rd x.
\end{align*}
For $I_5(t)$, we separate $u(s)=u(t)+(u(s)-u(t))$ and use assumptions (A2) and (A4) to estimate
\begin{align}
|I_5(t)| & =\left|\int_\Om\wt a\int_0^t f(t-s)a\nb(u(s)-u(t)+u(t))\,\rd s\cdot\int_0^t f(t-s)\nb(u(t)-u(s))\,\rd s\rd x\right|\nonumber\\
& \le\|\wt a\|_\infty\int_\Om a\left|\int_0^t f(t-s)\nb(u(t)-u(s))\,\rd s\right|^2\rd x\nonumber\\
& \quad\,+\|\wt a\|_\infty\int_0^t f(s)\,\rd s\int_\Om a\nb u(t)\cdot\int_0^t f(t-s)\nb(u(t)-u(s))\,\rd s\rd x\nonumber\\
& \le\f\ve 4\|\nb u(t)\|_2^2+\|\wt a\|_\infty\left(1+\f{(\la_0-\ell)^2}{\ve\|a\|_\infty}\right)\int_\Om a\left|\int_0^t f(t-s)\nb(u(t)-u(s))\,\rd s\right|^2\rd x\nonumber\\
& \le\f\ve 4\|\nb u(t)\|_2^2+C(\ve)\int_\Om a\left|\int_0^t f(t-s)\nb(u(t)-u(s))\,\rd s\right|^2\rd x.
\end{align}
Similarly, we estimate $I_6(t)$ as
\begin{equation}
|I_6(t)|\le\f\ve4\|\nb u(t)\|_2^2+C(\ve)\int_\Om a\left|\int_0^t f(t-s)\nb(u(t)-u(s))\,\rd s\right|^2\rd x.
\end{equation}
In the same manner, we estimate $I_7(t)$ and $I_9(t)$ as
\begin{align}
|I_7(t)| & \le\ve\int_\Om a\left|\int_0^t f(t-s)\nb(u(t)-u(s))\,\rd s\right|^2\rd x+C(\ve)\int_\Om b\,h(u_t(t))^2\,\rd x,\\
|I_9(t)| & \le\ve\|u_t(t)\|_2^2+\f1{4\ve}\int_\Om\left|\wt a\int_0^t f'(t-s)\nb(u(t)-u(s))\,\rd s\right|^2\rd x\nonumber\\
& \le\ve\|u_t(t)\|_2^2+\f C{4\ve}\int_\Om a\int_0^t f'(t-s)\,\rd s\int_0^t f'(t-s)|\nb(u(t)-u(s))|^2\,\rd s\rd x\nonumber\\
& \le\ve\|u_t(t)\|_2^2-\f{f(0)C}{4\ve}(f'\circ\nb u)(t).
\end{align}
Finally, for $I_8(t)$, we utilize Young's inequality and \eqref{4.2} to derive
\begin{align}
|I_8(t)| & \le\ve\int_\Om|u(t)|^{2(p-1)}\,\rd x+\f1{4\ve}\int_\Om k^2\,\wt a^2\left|\int_0^t f(t-s)(u(t)-u(s))\,\rd s\right|^2\rd x\nonumber\\
& \le\ve\|u(t)\|_{2(p-1)}^{2(p-1)}+\f {K^2C\|\wt a\|_\infty}{4\ve}\int_\Om a\left|\int_0^t f(t-s)\nb(u(t)-u(s))\,\rd s\right|^2\rd x\nonumber\\
& \le\ve B_{2(p-1)}\|\nb u(t)\|_2^{2(p-1)}+C(\ve)\int_\Om a\left|\int_0^t f(t-s)\nb(u(t)-u(s))\,\rd s\right|^2\rd x\nonumber\\
& \le\ve B_{2(p-1)}\eta_1\|\nb u(t)\|_2^2+C(\ve)\int_\Om a\left|\int_0^t f(t-s)\nb(u(t)-u(s))\,\rd s\right|^2\rd x,\label{est-I8}
\end{align}
where we used Lemma \ref{lemma3} with $r=2(p-1)$ and recall $K=\|k\|_\infty$. For the first term in \eqref{est-I8}, we combine \eqref{r2} with \eqref{eq-lower1} to deduce
\[
\|\nb u(t)\|_2^{2(p-2)}\le\left(\f{2\,\BE(t)}\ell\right)^{p-2}\le\left(\f{2(1+\wt C)E(0)}\ell\right)^{p-2}=\eta_1.
\]
Eventually, we arrive at \eqref{xx3.1} by combining all above estimates \eqref{est-I3}--\eqref{est-I8}.
\end{proof}

We are now prepared to demonstrate Theorem \ref{theorem3.2}. First we define an auxiliary function
$$
L(t):=N_1E(t)+F_3(t)+N_2F_4(t),
$$
where $N_1,N_2>0$ are constants to be determined soon. Using Young's inequality and \eqref{4.2}, we easily obtain
\begin{align*}
|F_3(t)| & \le\f{B_2}2\|\nb u(t)\|_2^2+\f12\|u_t(t)\|_2^2,\\
|F_4(t)| & \le\f12\|u_t(t)\|_2^2+\f{\|\wt a\|_\infty}2\int_\Om\wt a\left|\int_0^t f(t-s)(u(t)-u(s))\,\rd s\right|^2\rd x\\
& \le\f12\|u_t(t)\|_2^2+\f{C\|\wt a\|_\infty}2\int_\Om a\int_0^t f(t-s)\,\rd s\int_0^t f(t-s)|\nb(u(t)-u(s))|^2\,\rd s\rd x\\
& \le\f12\|u_t(t)\|_2^2+\f{C\la_0}2\int_\Om a\int_0^t f(t-s)|\nb(u(t)-u(s))|^2\,\rd s\rd x\\
& =\f12\|u_t(t)\|_2^2+\f{C\la_0}2(f\circ\nb u)(t).
\end{align*}
Then by choosing $N_1$ and $N_2$ suitably, we have the equivalence $L(t)\sim E(t)$.

Fix any $t_0>0$ and put $f_1:=\int_0^{t_0}f(s)\,\rd s$. Combining \eqref{2.6}, Lemma \ref{le3.2} and Lemma \ref{le3.3} with
\[
\ve=\f\ell{4N_2(1+B_{2(p-1)}\eta_1)},
\]
we obtain for all $t\ge t_0$ that
\begin{align*}
L'(t) & \le-\f\ell4\|\nb u(t)\|_2^2+(\ve N_2+1)\|u_t(t)\|_2^2-N_2f_1\int_\Om(\wt a+b)u_t(t)^2\,\rd x
+\int_\Om k|u(t)|^p\,\rd x\\
& \quad\,+(C(\ve)N_2+C)\int_\Om a\left|\int_0^t f(t-s)\nb(u(t)-u(s)\,\rd s\right|^2\rd x\\
& \quad\,+\left(\f{N_1}2-\f{f(0)C N_2}{4\ve}\right)(f'\circ\nb u)(t)+(C(\ve)N_2+C)\int_\Om b(h(u_t(t))^2+u_t(t)^2)\,\rd x.
\end{align*}
At this point, we first use \eqref{4.1} and choose $N_2$ sufficiently large so that
$$
\te:=N_2f_1\f\ka2-\f\ell{4(1+B_{2(p-1)}\eta_1)}-1>0,
$$
and then choose $N_1$ sufficiently large so that
$$
\f{N_1}2-\f{f(0)CN_2}{4\ve}>0.
$$
Hence we arrive at
\begin{align}
L'(t) & \le-\f\ell4\|\nb u(t)\|_2^2-\te\int_\Om u_t(t)^2\,\rd x+\int_\Om k|u(t)|^p\,\rd x+(C(\ve)N_2+C)\int_\Om b(h(u_t(t))^2+u_t(t)^2)\,\rd x\nonumber\\
& \quad\,+(C(\ve)N_2+C)\int_\Om a\left|\int_0^t f(t-s)\nb(u(t)-u(s)\,\rd s\right|^2\rd x.\label{4.5}
\end{align}
For the last term above, we recall the definition \eqref{eq-def-FM} of $K_\de(t)$ and $M(\de)$ to estimate
\begin{align}
& \quad\,\int_\Om a\left|\int_0^t f(t-s)\nb(u(t)-u(s)\,\rd s\right|^2\rd x\nonumber\\
&=\int_\Om a\left|\int_0^t \sqrt{\f{f(t-s)}{K_\de(t-s)}}\sqrt{f(t-s)K_\de(t-s)}\nb(u(t)-u(s)\,\rd s\right|^2\rd x\nonumber\\
& \le\int_0^t\f{f(s)}{K_\de(s)}\,\rd s\int_\Om a\int_0^t K_\de(t-s)f(t-s)|\nb(u(t)-u(s))|^2\,\rd s\rd x\nonumber\\
& \le M(\de)\int_0^t K_\de(t-s)f(t-s)\int_\Om a|\nb(u(t)-u(s))|^2\,\rd x\rd s.\label{4.6}
\end{align}

Next, we further define
$$
J(t):=L(t)+\f\ell{32\la_0}F_1(t)+{2(C(\ve)N_2+C)}F_2(t).
$$
Then we apply inequalities \eqref{r3.3}--\eqref{r3.4} in Lemma \ref{le3.1} and \eqref{4.6} to \eqref{4.5} to derive
\begin{align*}
J'(t) & \le-\left(\f\ell4-\f\ell{16}-{4(C(\ve)N_2+C)\la_0\de M(\de)}\right)\|\nb u(t)\|_2^2-\te\|u_t(t)\|_2^2-\f\ell{64\la_0 }(f\circ\nb u)(t)\\
& \quad\,+\int_\Om k|u(t)|^p\,\rd x+(C(\ve)N_2+C)\int_\Om b(h(u_t(t))^2+u_t(t)^2)\,\rd x.
\end{align*}
Here, the convergence \eqref{rr3.5} guarantees the existence of a constant $\de_0>0$ such that
$$
4(C(\ve)N_2+C)\la_0\de M(\de)<\f\ell{16},
$$
for any $0<\de<\de_0$. Recalling the original energy $E(t)$, we deduce
\begin{align*}
J'(t) & \le-\f\ell8\|\nb u(t)\|_2^2-\te\|u_t(t)\|_2^2-\f\ell{64\la_0 }(f\circ\nb u)(t)+\int_\Om k|u(t)|^p\,\rd x\\
& \quad\,+(C(\ve)N_2+C)\int_\Om b(h(u_t(t))^2+u_t(t)^2)\,\rd x\\
& \le-\ep\,E(t)-\left(\f\ell8-\f{\ep\mu_0}2\right)\|\nb u(t)\|_2^2-\left(\te-\f\ep2\right)\|u_t(t)\|_2^2-\left(\f\ell{64\la_0}-\f\ep2\right)(f\circ\nb u)(t)\\
& \quad\,+\left(1-\f\ep p\right)\int_\Om k|u(t)|^p\,\rd x+(C(\ve)N_2+C)\int_\Om b(h(u_t(t))^2+u_t(t)^2)\,\rd x,
\end{align*}
where $\ep>0$ is another adjustable parameter. Utilizing \eqref{eq-lower1}, \eqref{r3} and \eqref{r2}, we bound
\begin{align}
\int_\Om k|u(t)|^p\,\rd x & \le K B_p\|\nb u(t)\|_2^p\le K B_p\left(\f{2\,\BE(t)}\ell\right)^{\f{p-2}2}\|\nb u(t)\|_2^2\nonumber\\
& \le K B_p\left(\f{2(1+\wt C)E(0)}\ell\right)^{\f{p-2}2}\|\nb u(t)\|_2^2.\label{4.9}
\end{align}
Thanks to the key assumption \eqref{x4.1}, it is possible to choose a sufficiently small $\ep>0$ such that
$$
\f\ell8-\f{\ep\mu_0}2-K B_p\left(\f{2(1+\wt C)E(0)}\ell\right)^{\f{p-2}2}>0,\quad\te-\f\ep2>0,\quad\f\ell{64\la_0}-\f\ep2>0,
$$
and hence
\begin{equation}\label{4.10}
J'(t)+\ep\,E(t)\le C\int_\Om b(h(u_t(t))^2+u_t(t)^2)\,\rd x.
\end{equation}
In the sequel, we discuss the cases of $q=1$ and $q>1$ separately.\medskip

{\bf Case 1.} In the case of $q=1$, the assumption (A3) asserts that $h$ behaves linearly in $\BR$. Then it follows from \eqref{4.10} that
\begin{equation}\label{4.11}
J'(t)+\ep\,E(t)\le C\int_\Om b\,h(u_t(t))u_t(t)\,\rd x\le-C E'(t),
\end{equation}
where we employed \eqref{2.6} to obtain the second inequality above. Picking any $\tau>t_0>0$, we integrate the above inequality over $(t_0,\tau)$ to estimate
\[
J(\tau)-J(t_0)+\ep\int_{t_0}^\tau E(t)\,\rd t\le-C\int_{t_0}^\tau E'(t)\,\rd t=C(E(t_0)-E(\tau))\le C E(0),
\]
due to the monotonicity of $E(t)$. Since $J(\tau)\ge0$ and $J(t_0)\le C E(0)$ for some $t_0>0$, we conclude
\[
\int_{t_0}^\tau E(t)\,\rd t\le C J(t_0)+C E(0)\le C E(0).
\]
Since $\tau>t_0$ was chosen arbitrarily, the above estimate holds also as $\tau\to\infty$. Hence we arrive at \eqref{r3.1} by
\[
\int_0^\infty E(t)\,\rd t=\left(\int_0^{t_0}+\int_{t_0}^\infty\right)E(t)\,\rd t\le t_0E(0)+C E(0)\le C E(0),
\]
again by the monotonicity of $E(t)$, which further indicates \eqref{r3.2} immediately.\medskip

{\bf Case 2.} In the case of $q>1$, we apply \cite[Equation (4.11)]{22} to the right-hand side of \eqref{4.10} to derive
\begin{equation}\label{4.14}
J'(t)+\ep\,E(t)\le C\left\{-E'(t)+(-E'(t))^{\f2{q+1}}\right\}.
\end{equation}
Now introduce an auxiliary function $\vp:\ov{\BR_+}\longrightarrow\ov{\BR_+}$ which is nonincreasing and locally absolutely continuous. For the constant $t_0>0$ in Case 1 and any $\tau>t_0$, we multiply \eqref{4.14} by $\vp(t)$ and integrate over $(t_0,\tau)$ to deduce
\begin{equation}\label{eq-est-q0}
\int_{t_0}^\tau\vp(t)E(t)\,\rd t\le-C\int_{t_0}^\tau\vp(t)(J(t)+C E(t))'\,\rd t+C\int_{t_0}^\tau\vp(t)(-E'(t))^{\f2{q+1}}\,\rd t.
\end{equation}
For the first term on the right-hand side above, we perform integration by parts to calculate
\begin{align*}
-\int_{t_0}^\tau\vp(t)(J(t)+C E(t))'\,\rd t & =\Big[\vp(t)(J(t)+C E(t))\Big]_\tau^{t_0}+\int_{t_0}^\tau\vp'(t)(J(t)+C E(t))\,\rd t\\
& \le\vp(t_0)(J(t_0)+C E(t_0))\le C\vp(0)E(0),
\end{align*}
by the monotonicity of $\vp(t),E(t)$ and the same argument as before. For the second term on the right-hand side of \eqref{eq-est-q0}, we take advantage of Young’s inequality for products to estimate
\[
C\int_{t_0}^\tau\vp(t)(-E'(t))^{\f2{q+1}}\,\rd t\le\f12\int_{t_0}^\tau\vp(t)^{\f{q+1}{q-1}}\,\rd t-C\int_{t_0}^\tau E'(t)\,\rd t\le C E(0)+\f12\int_{t_0}^\tau\vp(t)^{\f{q+1}{q-1}}\,\rd t.
\]
Especially, taking $\vp(t)=E(t)^{\f{q-1}2}$ gives $\vp(t)E(t)=\vp(t)^{\f{q+1}{q-1}}=E(t)^{\f{q+1}2}$. Thus we substitute the above two inequalities into \eqref{eq-est-q0} to conclude
$$
\int_{t_0}^\tau E(t)^{\f{q+1}2}\,\rd t\le CE(0).
$$
The remaining proof is identical to that of Case 1.

\section{Proof of Theorem \ref{the3.2}}\label{sec-proof2}

To establish Theorem \ref{the3.2}, we require the following lemma.

\begin{lemma}\label{lem3.6}
Under assumptions {\rm(A4)--(A5),} there exists a sufficiently small $\ga\in(0,1)$ such that the term $(f\circ\nb u)(t)$ defined by \eqref{eq-f-grad} satisfies
\begin{empheq}[left={(f\circ\nb u)(t)\le\empheqlbrace}]{alignat=2}
& \f1\ga G^{-1}\left(\f{\ga\,\mu(t)}{\xi(t)}\right), & \quad & q=1,\label{3.28}\\
& \f t\ga G^{-1}\left(\f{\ga\,\mu(t)}{t\,\xi(t)}\right), & \quad & q>1,\label{r3.28}
\end{empheq}
where $\xi,G$ were introduced in assumption {\rm(A5)} and
\begin{equation}\label{x3.28}
\mu(t):=-(f'\circ\nb u)(t)\le-2E'(t).
\end{equation}
\end{lemma}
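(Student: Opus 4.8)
The plan is to collapse both cases into a single Jensen-type inequality and then let the exponent $q$ enter only through an a priori bound on the total memory mass $\int_0^t\Phi(s)\,\rd s$, where I abbreviate $\Phi(s):=\int_\Om a|\nb(u(t)-u(s))|^2\,\rd x\ge0$ for fixed $t$. With this notation \eqref{eq-f-grad} reads $(f\circ\nb u)(t)=\int_0^t f(t-s)\Phi(s)\,\rd s$ and $\mu(t)=\int_0^t(-f'(t-s))\Phi(s)\,\rd s$. First I would record the two elementary inequalities that feed the argument. The bound $\mu(t)\le-2E'(t)$ in \eqref{x3.28} is immediate from \eqref{2.6}, since $f\ge0$, $a\ge0$ and $h(u_t)u_t\ge0$ make the last two terms of $E'(t)$ nonpositive, leaving $E'(t)\le\f12(f'\circ\nb u)(t)=-\f12\mu(t)$. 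Next, invoking (A5) together with the monotonicity of $\xi$ (so that $\xi(t-s)\ge\xi(t)$ for $s\in[0,t]$) gives
\[
\mu(t)\ge\int_0^t\xi(t-s)G(f(t-s))\Phi(s)\,\rd s\ge\xi(t)\int_0^t G(f(t-s))\Phi(s)\,\rd s.
\]

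The heart of the proof is a Jensen step. Writing $Z:=\int_0^t\Phi(s)\,\rd s$ (the case $Z=0$ being trivial since then $(f\circ\nb u)(t)=0$), I would normalize to the probability measure $\rd\nu(s):=\Phi(s)\,\rd s/Z$ on $[0,t]$. Since $f$ is nonincreasing, $\int_0^t f(t-s)\,\rd\nu(s)\in[0,f(0)]$ lies in the domain of $G$, and convexity of $G$ yields
\[
G\left(\f{(f\circ\nb u)(t)}Z\right)\le\int_0^t G(f(t-s))\,\rd\nu(s)=\f1Z\int_0^t G(f(t-s))\Phi(s)\,\rd s\le\f{\mu(t)}{Z\,\xi(t)}.
\]
Applying the increasing inverse $G^{-1}$ (extended to $\ov{\BR_+}$ as in Remark \ref{remark1}) then produces the master estimate
\[
(f\circ\nb u)(t)\le Z\,G^{-1}\left(\f{\mu(t)}{Z\,\xi(t)}\right).
\]

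It remains to convert this into \eqref{3.28}--\eqref{r3.28} by bounding $Z$, and here the exponent $q$ finally enters. I would first observe that for each fixed $c\ge0$ the map $Z\mapsto Z\,G^{-1}(c/Z)$ is nondecreasing: this follows from the concavity of $G^{-1}$ together with $G^{-1}(0)=0$, since its $Z$-derivative equals $G^{-1}(y)-y(G^{-1})'(y)$ with $y=c/Z$, which is nonnegative by concavity. Consequently any upper bound $Z\le\ov Z$ may be substituted to give $(f\circ\nb u)(t)\le\ov Z\,G^{-1}(\mu(t)/(\ov Z\,\xi(t)))$. To bound $Z$ itself I use $\Phi(s)\le2\|a\|_\infty(\|\nb u(t)\|_2^2+\|\nb u(s)\|_2^2)$ and, via \eqref{eq-lower1} and \eqref{r2}, $\|\nb u(s)\|_2^2\le\f2\ell\BE(s)\le\f{2(1+\wt C)}\ell E(s)$. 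When $q>1$, Lemma \ref{lemma 2.7} alone gives $\Phi\le C$, hence $Z\le Ct$; taking $\ov Z=t/\ga$ with $\ga:=1/C$ yields \eqref{r3.28}. When $q=1$, I would additionally feed back the conclusions of the already-proved Theorem \ref{theorem3.2}: the integrability $\int_0^\infty E(s)\,\rd s\le CE(0)$ from \eqref{r3.1} controls $\int_0^t\|\nb u(s)\|_2^2\,\rd s$, while the rate $E(t)\le CE(0)(1+t)^{-1}$ from \eqref{r3.2} controls $t\,\|\nb u(t)\|_2^2\le CtE(t)\le C$; together these make $Z$ bounded uniformly in $t$, and taking $\ov Z=1/\ga$ with $\ga=1/C$ yields \eqref{3.28}.

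I expect the main obstacle to be precisely this uniform, $t$-independent bound on the memory mass $Z$ in the case $q=1$, which is what genuinely separates the two regimes; it is legitimate only because Theorem \ref{theorem3.2} is in force, so that both $\int_0^\infty E<\infty$ and $E(t)\lesssim(1+t)^{-1}$ are available to absorb the two terms of $Z$. A secondary point requiring care is the monotonicity of $Z\mapsto Z\,G^{-1}(c/Z)$ and the behaviour of $G^{-1}$ near the origin, where $G'(0)=0$ forces $(G^{-1})'$ to blow up; both are controlled by the convexity and the extension arranged in Remark \ref{remark1}, and the final choice of $\ga\in(0,1)$ is simply any value at most $1/C$.
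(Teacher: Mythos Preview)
Your proof is correct and follows essentially the same Jensen-inequality route as the paper: bound the memory mass $Z=\int_0^t\Phi(s)\,\rd s$, normalize to a probability measure, apply Jensen to the convex $G$, and invert. Two small remarks. First, for $q=1$ the paper bounds $Z$ using only \eqref{r3.1} together with the monotonicity of $E$, via $\int_0^t(E(t)+E(s))\,\rd s\le 2\int_0^t E(s)\,\rd s\le 2\int_0^\infty E(s)\,\rd s$; so your appeal to the decay rate \eqref{r3.2} to control $t\,E(t)$ works but is not needed. Second, your explicit verification that $Z\mapsto Z\,G^{-1}(c/Z)$ is nondecreasing is actually more careful than the paper, which simply chooses $\ga$ so that $\ga\int_0^t\Phi(s)\,\rd s=1$ and leaves the passage to a fixed, $t$-independent $\ga$ implicit.
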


\begin{proof}
Recall that the existence of the inverse function $G^{-1}$ was mentioned in Remark \ref{remark1}. Meanwhile, the inequality in \eqref{x3.28} follows immediately from \eqref{2.6} and assumptions (A2)--(A4).

For $q=1$, let us fix any $t>0$ and set
$$
\psi(s):=\int_\Om a|\nb(u(t)-u(s))|^2\,\rd x.
$$
Taking advantage of \eqref{eq-lower1}, \eqref{r2} and \eqref{r3.1} in Theorem \ref{theorem3.2}, we invoke the nonnegativity and the monotonicity of $E(t)$ to dominate
\begin{align*}
\int_0^t\psi(s)\,\rd s & \le2\|a\|_\infty\int_0^t\left\{\|\nb u(t)\|_2^2+\|\nb u(s)\|_2^2\right\}\rd s\le\f{4\|a\|_\infty}\ell\int_0^t(\BE(t)+\BE(s))\,\rd s\\
& \le C\int_0^t(E(t)+E(s))\,\rd s\le C\int_0^t E(s)\,\rd s\le C\int_0^\infty E(t)\,\rd t\le C E(0)<\infty.
\end{align*}
Then we can choose a sufficiently small  $\ga>0$ such that $\int_0^t\ga\,\psi(s)\,\rd s=1$, that is, $\ga\,\psi(s)\,\rd s$ is a probability measure on $(0,t)$. By assumption (A5) and especially the convexity of $G$, we utilize Jensen's inequality to deduce
\begin{align*}
\mu(t) & =\f1\ga\int_0^t(-f'(t-s))\left(\ga\int_\Om a|\nb(u(t)-u(s))|^2\,\rd x\right)\rd s\ge\f1\ga\int_0^t\,\xi(t-s)G(f(t-s))\,\ga\,\psi(s)\,\rd s\\
& \ge\f{\xi(t)}\ga\int_0^t G(f(t-s))\,\ga\,\psi(s)\,\rd s\ge\f{\xi(t)}\ga G\left(\int_0^t f(t-s)\,\ga\,\psi(s)\,\rd s\right)=\f{\xi(t)}\ga G(\ga(f\circ\nb u)(t)).
\end{align*}
which indicates \eqref{3.28} by performing $G^{-1}$ on both sides above.

Next we deal with $q>1$. In the same manner, we obtain
\begin{align*}
\f1t\int_0^t\psi(s)\,\rd s & \le\f{2\|a\|_\infty}t\int_0^t\left\{\|\nb u(t)\|_2^2+\|\nb u(s)\|_2^2\right\}\rd s\le\f{4\|a\|_\infty}{\ell\,t}\int_0^t(\BE(t)+\BE(s))\,\rd s\\
& \le \f Ct\int_0^tE(s)\,\rd s \le \f Ct\int_0^tE(0)\,\rd s\le C E(0)<\infty.
\end{align*}
Then again we can choose a sufficiently small $\ga>0$ such that $\f\ga t\int_0^t\psi(s)\,\rd s=1$. Parallel to the above argument, it follows that
\begin{align*}
\mu(t) & =\f t\ga\int_0^t(-f'(t-s))\left(\f\ga t\int_\Om a|\nb(u(t)-u(s))|^2\,\rd x\right)\rd s\\
& \ge\f t\ga\int_0^t\,\xi(t-s)G(f(t-s))\,\f\ga t\,\psi(s)\,\rd s\ge\f{\xi(t)t}\ga\int_0^t G(f(t-s))\,\f\ga t\,\psi(s)\,\rd s\\
& \ge\f{\xi(t)t}\ga G\left(\f\ga t\,\int_0^t f(t-s)\,\psi(s)\,\rd s\right)=\f{\xi(t)t}\ga G\left(\f\ga t(f\circ\nb u)(t)\right),
\end{align*}
 which indicates \eqref{r3.28}.
\end{proof}

Now we are in a position to prove Theorem \ref{the3.2}. As before, we divide the cases of $q=1$ and $q>1$ separately.\medskip

{\bf Case 1.} In the case of $q=1$, first we use H\"older's inequality to deal with
\begin{align}
\int_\Om a\left|\int_0^t f(t-s)\nb(u(t)-u(s))\,\rd s\right|^2\rd x & \le\int_\Om a\int_0^t f(s)\,\rd s\int_0^t f(t-s)|\nb(u(t)-u(s))|^2\,\rd x\rd s\nonumber\\
& \le\f{\la_0-\ell}{\|a\|_\infty}(f\circ\nb u)(t).\label{x6.1}
\end{align}
Combining \eqref{4.5}, \eqref{4.9}, \eqref{4.11} with \eqref{x6.1},  we derive the necessary estimate as follows
\begin{align*}
L'(t)\le-\ep\,E(t)+C(f\circ\nb u)(t)-C E'(t).
\end{align*}
Using the estimate \eqref{3.28} in Lemma \ref{lem3.6} and putting $L_1(t):=L(t)+C E(t)\sim E(t)$, we get
\[
L_1'(t)\le-\ep\,E(t)+ C G^{-1}\left(\f{\ga\,\mu(t)}{\xi(t)}\right).
\]
Now we pick a constant $\ve_1\in(0,f(0))$ and define
\[
S(t):=G'\left(\ve_1\f{E(t)}{E(0)}\right)L_1(t),
\]
which satisfies $S(t)\sim E(t)$. Noting that $G''(t)\ge0$ and $E'(t)\le 0$, we estimate
\begin{align}
S'(t) & =G'\left(\ve_1\f{E(t)}{E(0)}\right)L_1'(t)+\ve_1\f{E'(t)}{E(0)}G''\left(\ve_1\f{E(t)}{E(0)}\right)L_1(t)\nonumber\\
& \le-\ep\,E(t)G'\left(\ve_1\f{E(t)}{E(0)}\right)+C G'\left(\ve_1\f{E(t)}{E(0)}\right)G^{-1}\left(\f{\ga\,\mu(t)}{\xi(t)}\right).\label{x6.5}
\end{align}
Next, let $G^*$ be the convex conjugate of $G$ in the sense of Young (see \cite{5}), that is,
\[
G^*(s):=s\,(G')^{-1}(s)-G((G')^{-1}(s)),\quad s\in[0,G'(f(0))].
\]
On the other hand, it is known that $G^*$ satisfies the generalized Young's inequality
\begin{equation}\label{x6.7}
A B\le G^*(A)+G(B),\quad A\in[0,G'(f(0))],\ B\in[0,f(0)].
\end{equation}	
Taking $A=G'(\ve_1\f{E(t)}{E(0)})$ and $B=G^{-1}(\f{\ga\,\mu(t)}{\xi(t)})$ in \eqref{x6.7}, we can estimate \eqref{x6.5} as
\begin{align*}
S'(t)&\le-\ep\,E(t)G'\left(\ve_1\f{E(t)}{E(0)}\right)+C G^*\left(G'\left(\ve_1\f{E(t)}{E(0)}\right)\right)+C\f{\ga\,\mu(t)}{\xi(t)}\\
&\le-\ep\,E(t)G'\left(\ve_1\f{E(t)}{E(0)}\right)+C\ve_1\f{E(t)}{E(0)}G'\left(\ve_1\f{E(t)}{E(0)}\right)+C\f{\ga\,\mu(t)}{\xi(t)}.
\end{align*}
Multiplying both sides of the above estimate by $\xi(t)$, we use \eqref{x3.28} and the facts that $\ve_1\f{E(t)}{E(0)}<f(0)$ to derive
\begin{align*}
\xi(t)S'(t) & \le-\ep\,\xi(t)E(t)G'\left(\ve_1\f{E(t)}{E(0)}\right)+C\xi(t)\ve_1\f{E(t)}{E(0)}G'\left(\ve_1\f{E(t)}{E(0)}\right)+C\ga\,\mu(t)\\
& \le-(\ep\,E(0)-C\ve_1)\xi(t)\f{E(t)}{E(0)}G'\left(\ve_1\f{E(t)}{E(0)}\right)-C E'(t).
\end{align*}

Further setting $S_1(t):=\xi(t)S(t)+C E(t)$, we know $S_1(t)\sim E(t)$, that is, there exist constants $\al_1,\al_2>0$ such that
\begin{equation}\label{xx6.10}
\al_1S_1(t)\le E(t)\le\al_2S_1(t).
\end{equation}
With a suitable choice of $\ve_1>0$, we have
\begin{equation}\label{xx6.11}
S_1'(t)\le-\al_3\xi(t)\f{E(t)}{E(0)}G'\left(\ve_1\f{E(t)}{E(0)}\right)=-\al_3\xi(t)G_2\left(\f{E(t)}{E(0)}\right),
\end{equation}
where $\al_3:=\ep\,E(0)-C\ve_1>0$ and $G_2(t)=t\,G'(\ve_1t)$. By
\[
G_2'(t)=G'(\ve_1t)+\ve_1t\,G''(\ve_1t)
\]
and the properties of $G$, we see that $G_2,G_2'>0$ on $[0,f(0)]$. Finally introducing $S_2(t):=\f{\al_1S_1(t)}{E(0)}\sim E(t)$, we employ \eqref{xx6.10}--\eqref{xx6.11} to obtain
\[
S_2'(t)\le-k_1{\xi(t)}G_2\left(\f{E(t)}{E(0)}\right)\le-k_1\xi(t)G_2\left(\f{\al_1S_1(t)}{E(0)}\right)=-k_1\xi(t)G_2(S_2(t)),
\]
where $k_1:=\f{\al_1\al_3}{E(0)}$. Therefore, we divide both sides of the above inequality by $G_2(S_2(t))$ and integrate over some interval $(t_0,t)\subset\BR_+$ to deduce
\[
k_1\int_{t_0}^t\,\xi(s)\,\rd s\le-\int_{t_0}^t\f{S'_2(s)}{G_2(S_2(s))}\,\rd s=\int_{\ve_1S_2(t)}^{\ve_1S_2(t_0)}\f1{s\,G'(s)}\,\rd s
\]
by some appropriate change of variables. Fixing $t_0$ such that $\ve_1S_2(t_0)=f(0)$ and recalling the definition of $G_1(t)$ in Theorem \ref{the3.2}, we can rewrite the above inequality as
\[
G_1(\ve_1S_2(t))\ge k_1\int_{t_0}^t\,\xi(s)\,\rd s.
\]
Owing to the fact that $G_1$ is strictly decreasing on $[0,f(0)]$, we conclude
\[
S_2(t)\le\f1{\ve_1}G_1^{-1}\left(k_1\int_{t_0}^t\,\xi(s)\,\rd s\right),
\]
which eventually implies \eqref{7.1} by the equivalence of $S_2(t)$ and $E(t)$.\medskip

{\bf Case 2.} Now let us treat the case of $q>1$. First we combine the estimate \eqref{r3.28} in Lemma \ref{lem3.6} with \eqref{4.14} and \eqref{x6.1} to obtain
\[
L'(t)\le-\ep\,E(t)+ C\f t\ga G^{-1}\left(\f{\ga\,\mu(t)}{t\,\xi(t)}\right)-C E'(t)+C(-E'(t))^{\f2{q+1}}.
\]
In an analogous manner as before, we multiply both sides of the above inequality by $E(t)^{\f{q-1}2}$ and employ Young's inequality for products to estimate
\begin{align*}
E(t)^{\f{q-1}2}L'(t) & \le-\ep\,E(t)^{\f{q+1}2}+C E(0)^{\f{q-1}2}t\,G^{-1}\left(\f{\ga\,\mu(t)}{t\,\xi(t)}\right)-C E'(t)E(t)^{\f{q-1}2}+C(-E'(t))^{\f2{q+1}}E(t)^{\f{q-1}2}\\
& \le-\f\ep2E(t)^{\f{q+1}2}+ C\,t\,G^{-1}\left(\f{\ga\,\mu(t)}{t\,\xi(t)}\right)-C E'(t)E(t)^{\f{q-1}2}-C E'(t).
\end{align*}
Introducing an auxiliary function
\[
R(t):=E(t)^{\f{q-1}2}L(t)+C E(t)^{\f{q+1}2}+C E(t),
\]
we have $R(t)\sim E(t)$ and
\begin{equation}\label{eq-est-R0}
R'(t)\le-\f\ep2E(t)^{\f{q+1}2}+C\,t\,G^{-1}\left(\f{\ga\,\mu(t)}{t\,\xi(t)}\right).
\end{equation}

Next, we pick a constant $\ve_1\in(0,f(0))$ and define
\[
R_1(t):=G'\left(\f{\ve_1}t\left(\f{E(t)}{E(0)}\right)^{\f{q+1}2}\right)R(t),
\]
which satisfies $R_1(t)\sim E(t)$. Noticing $G''(t)\ge0$ and $E'(t)\le 0$, we use \eqref{eq-est-R0} to deduce
\begin{align*}
R_1'(t) & \le G'\left(\f{\ve_1}t\left(\f{E(t)}{E(0)}\right)^{\f{q+1}2}\right)R'(t)\\
& \le-\f\ep2E(t)^{\f{q+1}2}G'\left(\f{\ve_1}t\left(\f{E(t)}{E(0)}\right)^{\f{q+1}2}\right)+C\,t\,G'\left(\f{\ve_1}t\left(\f{E(t)}{E(0)}\right)^{\f{q+1}2}\right)G^{-1}\left(\f{\ga\,\mu(t)}{t\,\xi(t)}\right).
\end{align*}
Here we apply \eqref{x6.7} with
\[
A=G'\left(\f{\ve_1}t\left(\f{E(t)}{E(0)}\right)^{\f{q+1}2}\right),\quad B=G^{-1}\left(\f{\ga\,\mu(t)}{t\,\xi(t)}\right)
\]
to the above inequality to further bound
\begin{align*}
R_1'(t) & \le-\f\ep2E(t)^{\f{q+1}2}G'\left(\f{\ve_1}t\left(\f{E(t)}{E(0)}\right)^{\f{q+1}2}\right)+C\,t\,G^*\left(G'\left(\f{\ve_1}t\left(\f{E(t)}{E(0)}\right)^{\f{q+1}2}\right)\right)+C\f{\ga\,\mu(t)}{\xi(t)}\\
& \le-\f\ep2E(t)^{\f{q+1}2}G'\left(\f{\ve_1}t\left(\f{E(t)}{E(0)}\right)^{\f{q+1}2}\right)+C\ve_1\left(\f{E(t)}{E(0)}\right)^{\f{q+1}2}G'\left(\f{\ve_1}t\left(\f{E(t)}{E(0)}\right)^{\f{q+1}2}\right)+C\f{\ga\,\mu(t)}{\xi(t)}.
\end{align*}
Multiplying both sides of the above estimate by $\xi(t)$, we use \eqref{x3.28} and the facts that
\[
\f{\ve_1}t\left(\f{E(t)}{E(0)}\right)^{\f{q+1}2}<f(0)
\]
to derive
\begin{align}
\xi(t)R_1'(t) & \le-\f\ep2\xi(t)E(t)^{\f{q+1}2}G'\left(\f{\ve_1}t\left(\f{E(t)}{E(0)}\right)^{\f{q+1}2}\right)\nonumber\\
& \quad\,+C\xi(t)\ve_1\left(\f{E(t)}{E(0)}\right)^{\f{q+1}2}G'\left(\f{\ve_1}t\left(\f{E(t)}{E(0)}\right)^{\f{q+1}2}\right)+C\ga\,\mu(t)\nonumber\\
& \le-\ve_2\xi(t)\left(\f{E(t)}{E(0)}\right)^{\f{q+1}2}G'\left(\f{\ve_1}t\left(\f{E(t)}{E(0)}\right)^{\f{q+1}2}\right)-C E'(t),\label{6.9}
\end{align}
where
\[
\ve_2:=\f\ep2E(0)^{\f{q+1}2}-C\ve_1
\]
and we can choose $\ve_1>0$ sufficiently small such that $\ve_2>0$.

By further putting $R_2(t):=\xi(t)R_1(t)+C E(t)$ as $S_1(t)$ in Case 1, again we have $R_2(t)\sim E(t)$. Then we employ $\xi'(t)\le0$ and \eqref{6.9} to estimate $R_2'(t)$ as
\[
R_2'(t)=\xi'(t)R_1(t)+\xi(t)R_1'(t)+C E'(t)\le-\ve_2\xi(t)\left(\f{E(t)}{E(0)}\right)^{\f{q+1}2}G'\left(\f{\ve_1}t\left(\f{E(t)}{E(0)}\right)^{\f{q+1}2}\right)\le0.
\]
Then obviously $R_2'(t)\le0$ and integrating the above inequality over some interval $(t_0,t)\subset\BR_+$ yields
\[
\ve_2\int_{t_0}^t\xi(s)\left(\f{E(s)}{E(0)}\right)^{\f{q+1}2}G'\left(\f{\ve_1}s\left(\f{E(s)}{E(0)}\right)^{\f{q+1}2}\right)\rd s\le-\int_{t_0}^t R_2'(s)\,\rd s\le R_2(t_0).
\]
By the monotonicity of $G'(t),G''(t)$ and $E(t)$, the function
\[
t\longmapsto\left(\f{E(t)}{E(0)}\right)^{\f{q+1}2}G'\left(\f{\ve_1}t\left(\f{E(t)}{E(0)}\right)^{\f{q+1}2}\right)
\]
turns out to be nonincreasing. Therefore, we arrive at
\[
\ve_2\left(\f{E(t)}{E(0)}\right)^{\f{q+1}2}G'\left(\f{\ve_1}t\left(\f{E(t)}{E(0)}\right)^{\f{q+1}2}\right)\int_{t_0}^t\,\xi(s)\,\rd s\le R_2(t_0).
\]
Dividing both sides of the above estimate by $t$ and recalling the definition of $G_2(t)$ in Theorem \ref{the3.2}, we conclude
\[
\ve_2G_2\left(\f1t\left(\f{E(t)}{E(0)}\right)^{\f{q+1}2}\right)\int_{t_0}^t\,\xi(s)\,\rd s\le\f{R_2(t_0)}t.
\]
Consequently, owing to the fact that $G_2$ is strictly increasing in $\ov{\BR_+}$, we obtain the desired estimate \eqref{rr3.44} by putting $k_2:=R_2(t_0)/\ve_2$. The proof of Theorem \ref{the3.2} is completed.

\section*{Acknowledgements}

The first author is supported by China Scholarship Council. The second author is supported by JSPS KAKENHI Grant Numbers JP22K13954 and JP23KK0049.

\end{document}